\newenvironment{customthm}[1]
  {\innercustomthm}
  {\endinnercustomthm}
\newenvironment{customlm}[1]
  {\innercustomlm}
  {\endinnercustomlm}
\newtheorem{theorem}{Theorem}[subsection]
\newtheorem{corollary}[theorem]{Corollary}
\newtheorem{proposition}[theorem]{Proposition}
\theoremstyle{definition}
\newtheorem{remark}[theorem]{Remark}
\newtheorem{example}[theorem]{Example}
\numberwithin{equation}{section}
\numberwithin{theorem}{section}
\newcommand{\cA}{{\cal A}}
\newcommand{\cR}{{\cal R}}
\newcommand{\C}{{\mathbb C}}
\newcommand{\T}{{\mathbb T}}
\newcommand{\Q}{{\mathbb Q}}
\newcommand{\R}{{\mathbb R}}
\newcommand{\Cs}{{$C^*$-al\-ge\-bra}}
\DeclareMathOperator{\tr}{tr}
\date{\empty}
\title{Non-closure of quantum correlation matrices and factorizable channels that require infinite dimensional ancilla}
\author{Magdalena Musat$^*$ and Mikael R\o rdam\thanks{This research was supported by a travel grant from the Carlsberg Foundation, and by a research grant from the Danish Council for Independent Research, Natural Sciences. This work was carried out in Spring 2018, while the authors were visiting the Institute for Pure and Applied Mathematics (IPAM), which is supported by the National Science Foundation.}}
\begin{document}

\maketitle

\vspace{-.5cm}
{\centerline{(With an Appendix by Narutaka Ozawa)}}

\begin{abstract} 
 We show that there exist factorizable quantum channels in each dimension $\ge 11$ which do not admit a factorization through any finite dimensional von Neumann algebra, and do require ancillas of type II$_1$, thus witnessing new infinite-dimensional phenomena in quantum information theory.
We show that the set of $n \times n$ matrices of correlations arising as second-order moments of projections in finite dimensional von Neumann algebras with a distinguished trace is non-closed, for all $n \ge 5$, and we use this to
give a simplified proof of the recent result of Dykema, Paulsen and Prakash that the set of 
synchronous quantum correlations $C_q^s(5,2)$ is non-closed. Using a trick originating in work of Regev, Slofstra and Vidick, we  further show that the set of  correlation matrices  arising from second-order  moments of unitaries in finite dimensional von Neumann algebras with a distinguished trace is non-closed in each dimension $\ge 11$, from which we derive the first  result above.
\end{abstract}

\section{Introduction}

\noindent  C. Anantharaman-Delaroche introduced in \cite{A-D:factorizable}  the class of \emph{factorizable} completely positive maps between von Neumann algebras equipped with a normal faithful state, while studying non-commutative analogues of classical ergodic theory results, including, e.g., G.-C.\ Rota's ``Alternierende Verfahren'' theorem.

It was shown in \cite{HaaMusat:CMP-2011} that not all unital completely positive, trace-preserving maps on $M_n(\C)$ (also referred to as unital quantum channels in dimension $n$) are factorizable when $n\geq 3$, which also led to a negative answer to the so-called \emph{asymptotic quantum Birkoff conjecture},  \cite{SmoVerWin:entagle}. The tool was the characterization established in \cite{HaaMusat:CMP-2011} 
that a unital, completely positive, trace-preserving map 
$T \colon M_n(\C) \to M_n(\C)$ is factorizable if and only if there exist a finite von Neumann algebra $N$ equipped with a (faithful, normal) tracial state $\tau$ and a unitary $u$ in $M_n(\C) \otimes N$ such that
$T(x) = (\mathrm{id_n} \otimes \tau)(u(x \otimes 1_N)u^*)$, for all $x \in M_n(\C)$. Following the terminology introduced in \cite{HaaMusat:CMP-2015}, we say in this case that $T$ admits an \emph{exact factorization through} $M_n(\C) \otimes N$, and $N$ is called the \emph{ancilla}. In all previously studied cases of factorizable maps (see  \cite{HaaMusat:CMP-2011} and  \cite{HaaMusat:CMP-2015}, and the recent paper \cite{Muller-Hermes-Perry:4-noisy} for the case $n=2$), the ancilla could be taken to be finite dimensional, and even a full matrix algebra. It was, however, first remarked in \cite{Musat:2018} that one cannot always take the ancilla to be a full matrix algebra. In this paper we show that for each $n \ge 11$, there are factorizable maps on $M_n(\C)$ that do not  admit a finite dimensional ancilla, nor an ancilla of type I, and we give concrete examples of such maps, see Example~\ref{rm:B} and Theorem~\ref{thm:infdimfactorizable}.  Therefore, one needs to employ ancillas of type II$_1$ to describe a general factorizable channel in dimension $n\geq 11$. Observe that all factorizable quantum channels do admit an exact factorization through a type II$_1$ von Neumann algebra (even a type II$_1$ factor), by the well-known fact that each finite von Neumann algebra equipped with a fixed faithful normal tracial state embeds in a trace-preserving way into a type II$_1$ factor.  

The proof of our result uses very recent developments of analysis in  quantum information theory concerning the non-closure of certain sets of correlation matrices. The first such result was due to Slofstra, who proved the failure of what is referred to as the {\em strong Tsirelson conjecture}. This was recently refined by Dykema, Paulsen and Prakash, \cite{DykPauPra:non-closure}, to show that the set of synchronous correlation matrices $C_q^s(n,k)$ is non-closed, when $n \ge 5$ and $k \ge 2$. 

We consider the set $\mathcal{D}(n)$ of $n \times n$ matrices arising from second-order moments of $n$-tuples of projections in finite von Neumann algebras with a (normal, faithful) tracial state, and the subset $\mathcal{D}_\mathrm{fin}(n)$ consisting of those matrices that arise likewise from  $n$-tuples of projections in finite-dimensional von Neumann algebras (or \Cs s). We show that the set $\mathcal{D}_\mathrm{fin}(n)$ is not closed, when $n \ge 5$. Our proof uses  a theorem of Kruglyak, Rabanovich and Samoilenko, {\cite{KRS:Sums_2002}}, also employed in \cite{DykPauPra:non-closure}, which describes which scalar multiples of the identity operator on a (finite dimensional)  Hilbert space can arise as the sum of $n$ projections. We use this to give a shorter and  more direct proof of the main result from \cite{DykPauPra:non-closure} that the set of synchronous quantum correlation matrices $C_q^s(n,2)$ is non-closed, when $n \ge 5$. 

Kirchberg, \cite{Kir:CEP-1993}, reformulated the Connes Embedding Problem in terms of the set $\mathcal{G}(n)$ of  $n \times n$ matrices of correlations arising from unitaries in finite von Neumann algebras with a (normal, faithful) tracial state.  This result was further refined by Dykema and Juschenko, \cite{DykJus:MS2011}, and in their formulation, the Connes Embedding Problem is equivalent to the statement that $\mathcal{F}(n) = \mathcal{G}(n)$, for all $n \ge 3$, where $\mathcal{F}(n)$ is the closure of the set of $n \times n$  matrices of correlations arising from unitaries in full matrix algebras. A trick originating in (as of yet unpublished) work of Regev, Slofstra and Vidick (communicated to us by W.\ Slofstra in May 2018), which we carry out in the  setting of finite von Neumann algebras in Section 3, allows us to conclude further that the set $\mathcal{F}_{\mathrm{fin}}(2n+1)$ of  matrices of correlations arising from unitaries in finite dimensional von Neumann algebras is non-closed, whenever $\mathcal{D}_\mathrm{fin}(n)$ is non-closed, i.e., for all $n \ge 5$. 

A connection between the set  $\mathcal{G}(n)$ and the set of factorizable Schur multipliers on $M_n(\C)$ was established in \cite{HaaMusat:CMP-2015}. This connection gives the final link between the established non-closure of the sets $\mathcal{F}_{\mathrm{fin}}(2n+1)$, for $n \ge 5$, and existence of factorizable Schur multipliers with no finite dimensional ancilla (or, even stronger, non type I), in each dimension $\geq 11$.

In the Appendix, written by N.\ Ozawa, it is shown that the construction by Kruglyak, Rabanovich and Samoilenko in {\cite{KRS:Sums_2002}} of an $n$-tuple of projections with sum equal to a multiple $\alpha$ of the identity can be realized in the hyperfinite II$_1$ factor $\mathcal{R}$,  for all admissible values of $\alpha$,  except, possibly, for two extremal ones. This, in turn, implies that the factorizable Schur multipliers with no  finite dimensional ancilla found in this article \emph{do} admit $\mathcal{R}$ as an ancilla (except, possibly, for the cases corresponding to the above mentioned extremal values of $\alpha$). 

Different sets of matrices of correlations arising from the generators of L. Brown's universal C$^*$-algebra were shown to be non-closed by Harris and Paulsen \cite{Harris-Paulsen:correlation}. This was used by Gao, Harris and Junge \cite{Gao-Harris-Junge:superdense} to obtain further non-closure results in a matrix-valued setting.

\section{Non-closure of sets of matrices of quantum correlations}

\noindent  For $n\geq 2$, let $\mathcal{D}(n)$ and $\mathcal{D}_{\mathrm{fin}}(n)$ be the set of $n \times n$ matrices $\big[\tau(p_jp_i)\big]_{i,j=1}^n$, where $p_1,\dots, p_n$ are projections in some arbitrary finite von Neumann algebra, respectively, in some finite dimensional von Neumann algebra, equipped with a (normal) faithful tracial state $\tau$. We show in this section that $\mathcal{D}_{\mathrm{fin}}(n)$ is non-closed, when $n \ge 5$, and we use this to give a more direct proof, avoiding graph correlation functions, of the very recent result of Dykema, Paulsen and Prakash, \cite{DykPauPra:non-closure}, that the set of synchronous quantum correlations $C_q^s(5,2)$ is non-closed. 

Standard arguments involving ultralimits, as in the proof of Proposition~\ref{thm:maina} (v) below, respectively, direct sums of finite von Neumann algebras, show that the set 
$\mathcal{D}(n)$ is compact and convex. One can likewise show that the set $\mathcal{D}_{\mathrm{fin}}(n)$ is convex.  The subset $\mathcal{D}_{\mathrm{matrix}}(n)$ of  $\mathcal{D}_{\mathrm{fin}}(n)$  consisting of $n \times n$ matrices $\big[\mathrm{tr}_k(p_jp_i)\big]_{i,j=1}^n$, where $p_1,\dots, p_n$ are projections in a matrix algebra $M_k(\C)$, for $k \ge 1$, is not convex (and not closed), for any $n \ge 1$, since each diagonal entry of such a matrix is the (normalized) trace of a projection in a matrix algebra, which is a rational number. It is not hard to see that $\mathcal{D}_{\mathrm{matrix}}(n)$ is a dense subset of  $\mathcal{D}_{\mathrm{fin}}(n)$. 

Note that the closure of $\mathcal{D}_{\mathrm{matrix}}(n)$ is equal to $\mathcal{D}(n)$, for all $n \ge 3$, if and only if the Connes Embedding Problem has an affirmative answer (see Section~\ref{sec:unitaries} for a sketch of a proof of this fact).  Let us also observe that  $\mathcal{D}_{\mathrm{fin}}(2)= \mathcal{D}(2)$ is closed.

\begin{proposition}  We have
\begin{equation} \label{eq:D(2)}
\mathcal{D}(2) = \Big\{ \begin{pmatrix}s & u \\ u & t \end{pmatrix}:  \, 0 \le s,t \le 1, \; \max\{0,s+t-1\} \le u \le \min\{s,t\}\Big\}.
\end{equation}
Moreover, each matrix in $\mathcal{D}(2)$ arises from a pair of projections in the commutative finite-dimensional \Cs{} $\C \oplus \C \oplus \C \oplus \C$ (with respect to a suitable trace). 

In particular, it follows that $\mathcal{D}_{\mathrm{fin}}(2) = \mathcal{D}(2)$ and $\mathcal{D}_{\mathrm{fin}}(2)$ is closed.
\end{proposition}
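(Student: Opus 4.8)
The plan is to prove the two inclusions describing $\mathcal{D}(2)$ separately, and then read off the remaining assertions. I would write a matrix in $\mathcal{D}(2)$ as $\left(\begin{smallmatrix} s & u \\ u & t \end{smallmatrix}\right)$, where $s = \tau(p_1)$, $t = \tau(p_2)$, and $u = \tau(p_1 p_2)$. The off-diagonal entries coincide and are real because $\tau$ is a trace: $\tau(p_2 p_1) = \tau(p_1 p_2) = \overline{\tau((p_1 p_2)^*)} = \overline{\tau(p_2 p_1)}$. It then remains only to pin down the range of the triple $(s,t,u)$.

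For the inclusion $\mathcal{D}(2) \subseteq$ (the right-hand side of \eqref{eq:D(2)}), I would derive the four defining inequalities from positivity. Clearly $0 \le s, t \le 1$. Since $p_1 p_2 p_1 = (p_2 p_1)^*(p_2 p_1) \ge 0$ and $p_1 p_2 p_1 \le p_1$ (using $0 \le p_2 \le 1$), applying $\tau$ together with the trace identity $\tau(p_1 p_2 p_1) = \tau(p_1 p_2) = u$ gives $0 \le u \le s$; the symmetric estimate gives $u \le t$. Finally, applying the very same argument to the complementary projections $1 - p_1$ and $1 - p_2$ yields $\tau\big((1-p_1)(1-p_2)\big) = 1 - s - t + u \ge 0$, that is, $u \ge s + t - 1$. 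Together these are exactly $\max\{0, s+t-1\} \le u \le \min\{s,t\}$.

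For the reverse inclusion, and simultaneously the ``moreover'' statement, I would exhibit an explicit realization in a four-point commutative algebra. Given $(s,t,u)$ satisfying the stated inequalities, I assign to the four Venn regions of a pair of subsets of $\{1,2,3,4\}$ the weights $w_{11} = u$, $w_{10} = s - u$, $w_{01} = t - u$, and $w_{00} = 1 - s - t + u$. The four inequalities say precisely that these weights are non-negative, and they sum to $1$ by construction, so they define a tracial state on $\C \oplus \C \oplus \C \oplus \C$. Taking $p_1$ and $p_2$ to be the indicator projections of $\{1,2\}$ and $\{1,3\}$ respectively, one computes $\tau(p_1) = w_{11}+w_{10} = s$, $\tau(p_2) = w_{11}+w_{01} = t$, and $\tau(p_1 p_2) = w_{11} = u$, recovering the desired matrix.

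The only point requiring care — and the main (mild) obstacle — is faithfulness of the trace, which is part of the definition of $\mathcal{D}_{\mathrm{fin}}(2)$ but may fail on $\C^4$ when $(s,t,u)$ lies on the boundary and some weight vanishes. This is handled by simply discarding the vanishing summands and realizing the same matrix in the smaller algebra $\C^k$ with $k \le 4$, on which the induced trace is faithful; this is still finite-dimensional, so the matrix lies in $\mathcal{D}_{\mathrm{fin}}(2)$. Combining the two inclusions yields $\mathcal{D}(2) \subseteq \mathcal{D}_{\mathrm{fin}}(2)$, and since the reverse containment $\mathcal{D}_{\mathrm{fin}}(2) \subseteq \mathcal{D}(2)$ is immediate from the definitions, we obtain $\mathcal{D}_{\mathrm{fin}}(2) = \mathcal{D}(2)$. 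Finally, the right-hand side of \eqref{eq:D(2)} is cut out by finitely many non-strict affine inequalities on a bounded region, hence closed; therefore $\mathcal{D}_{\mathrm{fin}}(2)$ is closed.
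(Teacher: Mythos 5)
Your proof is correct, and everything except one step mirrors the paper's argument: the same positivity bounds $0\le \tau(pqp)\le \tau(p)$ for the upper constraint, and the same explicit realization in $\C\oplus\C\oplus\C\oplus\C$ with weights $(u,\,s-u,\,t-u,\,1-s-t+u)$ for the reverse inclusion. The one genuine divergence is the lower bound $u\ge s+t-1$: the paper derives it from Kaplansky's parallelogram law, $\tau(p\wedge q)=\tau(p)+\tau(q)-\tau(p\vee q)\ge s+t-1$, combined with $p\wedge q\le qpq$, whereas you simply apply the already-established inequality $\tau(ef)\ge 0$ to the complementary projections $1-p_1$ and $1-p_2$, giving $1-s-t+u\ge 0$ directly. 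Your route is more elementary --- it avoids the lattice of projections entirely and makes the bound visibly symmetric under $p\mapsto 1-p$ (an involution the authors themselves exploit later, in Remark 2.9) --- while the paper's route via $p\wedge q$ gives the slightly finer information that the bound is witnessed by the infimum projection. One further small improvement on your part: you explicitly handle faithfulness of the trace on $\C^4$ by discarding zero-weight summands when $(s,t,u)$ lies on the boundary; the paper allows $\alpha_j=0$ without comment, so your remark closes a (very minor) gap in their write-up.
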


\begin{proof} Let $(M,\tau)$ be a finite von Neumann algebra equipped with a tracial state, and let $p,q \in M$ be projections. The associated matrix in $\mathcal{D}(2)$ is
$$\begin{pmatrix} \tau(p) & \tau(pq) \\ \tau(qp) & \tau(q) \end{pmatrix} = \begin{pmatrix} s & u \\ u & t \end{pmatrix},$$
where $s = \tau(p)$, $t = \tau(q)$ and $u = \tau(pq)=\tau(qp) = \tau(pqp) = \tau(qpq)$. It is clear that $0 \le s,t \le 1$. Since $0 \le pqp \le p$ and $0 \le qpq \le q$, we further conclude that $0 \le u \le \min\{s,t\}$. 

By Kaplansky's formula (see, e.g., \cite{KadRin:opII}),  we get
$\tau(p \wedge q) = \tau(p)+\tau(q)-\tau(p \vee q) \ge \tau(p)+\tau(q)-1$. As $p \wedge q = q(p \wedge q)q \le qpq$, we infer that $u = \tau(qpq) \ge \tau(p \wedge q) \ge s+t-1$.

To complete the proof, we show that each matrix in the right-hand side of \eqref{eq:D(2)}
arises from projections $p$ and $q$ in $\C \oplus \C \oplus \C \oplus \C$, with respect to the trace with weight $(\alpha_1,\alpha_2,\alpha_3,\alpha_4)$, satisfying $\alpha_j \ge 0$ and $\sum_{j=1}^4 \alpha_j=1$. 
Set $p = (1,1,0,0)$ and $q = (1,0,1,0)$. Then $pq = (1,0,0,0)$ and 
$$\begin{pmatrix} \tau(p) & \tau(pq) \\ \tau(qp) & \tau(q) \end{pmatrix} = \begin{pmatrix} \alpha_1+\alpha_2 & \alpha_1 \\ \alpha_1 & \alpha_1+\alpha_3 \end{pmatrix}.$$
We must therefore choose $\alpha_1 = u \ge 0$, $\alpha_2 = s-u \ge 0$, $\alpha_3 = t-u \ge 0$, and $\alpha_4 = 1-\alpha_1-\alpha_2-\alpha_3$. The inequality $u \ge s+t-1$ ensures that $\alpha_1+\alpha_2+\alpha_3 \le 1$, whence $\alpha_4 \ge 0$. 
\end{proof}

\noindent
We do not know if the sets $\mathcal{D}_{\mathrm{fin}}(n)$ are closed for $n=3,4$.

We proceed to prove that the sets $\mathcal{D}_\mathrm{fin}(n)$ are non-closed, for $n \ge 5$, following the idea of Dykema, Paulsen and Prakash to use Theorem~\ref{thm:KRS} below from \cite{KRS:Sums_2002}.  As in \cite{KRS:Sums_2002}, let $\Sigma_n$ be the set of all $\alpha \ge 0$ for which there exist projections $p_1,\dots, p_n$ on a Hilbert space $H$ such that $\sum_{j=1}^n p_j = \alpha \cdot I_H$. The sets $\Sigma_n$ are completely described in  \cite{KRS:Sums_2002} and have the following properties: They are symmetric, i.e., if $\alpha \in \Sigma_n$, then $n-\alpha\in \Sigma_n$. Moreover, $\Sigma_2 = \{0,1,2\}$ and $\Sigma_3 = \{0,1,\frac32,2,3\}$. The set $\Sigma_4$ is a countably infinite  subset of the rational numbers, $\Q$, with one accumulation point, namely $2$. For $n \ge 5$, $\Sigma_n$ is the union of the interval $\big[\frac12 (n-\sqrt{n^2-4n}), \frac12 (n+\sqrt{n^2-4n})\big]$ and a countably infinite discrete subset of rational numbers, containing $\{0,1,\frac{n}{n-1}, n-\frac{n}{n-1}, n-1,n\}$. Observe that $\frac{n}{n-1} < \frac12 (n-\sqrt{n^2-4n})$. Set  
\begin{equation} \label{eq:Pi}
\Pi_n = \big[{\textstyle{\frac12}}(1-\sqrt{1-4/n}), {\textstyle{\frac12}}(1+\sqrt{1-4/n})\big] \subset \Big(\textstyle{\frac{2n-1}{n(n-1)}}, 1-\textstyle{\frac{2n-1}{n(n-1)}}\Big).
\end{equation}
Then $\Pi_n$ is an interval with non-empty interior, and $n^{-1}\Sigma_n \setminus \Pi_n$ is contained in $\Q$.

\begin{theorem}[Kruglyak, Rabanovich and Samoilenko, {\cite[Theorem 6]{KRS:Sums_2002}}]  \label{thm:KRS}
Let $n \ge 2$ be an integer. Then there exist projections $p_1, \dots, p_n$ on some \emph{finite dimensional}  Hilbert space $H$ such that $\sum_{j=1}^n p_j = \alpha \cdot I_H$ if and only if $\alpha \in \Sigma_n \cap \Q$.
\end{theorem}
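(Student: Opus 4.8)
The plan is to prove the two implications separately, with the forward implication (``finite-dimensional realization $\Rightarrow \alpha \in \Sigma_n \cap \Q$'') being routine and the converse carrying the substance.

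For the forward direction, suppose $p_1, \dots, p_n$ are projections on a finite-dimensional Hilbert space $H$ with $\sum_{j=1}^n p_j = \alpha \cdot I_H$. Since $H$ is in particular a Hilbert space, $\alpha \in \Sigma_n$ directly from the definition of $\Sigma_n$. Applying the unnormalized trace $\Tr$ on $B(H)$ to both sides gives $\alpha \dim(H) = \sum_{j=1}^n \Tr(p_j) = \sum_{j=1}^n \rank(p_j)$, which is a non-negative integer. Hence $\alpha \in \frac{1}{\dim(H)}\Z \subset \Q$, so $\alpha \in \Sigma_n \cap \Q$. Note that this already explains why irrational $\alpha$ in $\Sigma_n$ can only be realized infinite-dimensionally.

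For the converse, let $\alpha \in \Sigma_n \cap \Q$; I must produce projections on a finite-dimensional space realizing $\alpha$. First I would record the symmetry $p_j \mapsto I_H - p_j$, which sends a realization of $\alpha$ to one of $n - \alpha$ and preserves finite-dimensionality, so that it suffices to treat $\alpha \le n/2$. Using the explicit description of $\Sigma_n$ recalled above, I would split into two cases. If $\alpha$ is one of the \emph{discrete} admissible rational values (such as $0$, $1$, $\frac{n}{n-1}$, and the other isolated points lying below the central interval), I would exhibit a finite-dimensional representation directly: for example $\alpha = 1$ is realized by $p_1 = I_\C$ and $p_2 = \dots = p_n = 0$ on $H = \C$, while $\alpha = \frac{n}{n-1}$ is realized by a unit-norm tight frame of $n$ vectors in $\C^{n-1}$, viewed as rank-one projections whose sum is $\frac{n}{n-1} I_{n-1}$; the remaining discrete points are handled by analogous block and frame constructions.

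The hard case, and the main obstacle, is a rational $\alpha$ lying in the central interval $\big[\frac12(n-\sqrt{n^2-4n}), \frac12(n+\sqrt{n^2-4n})\big]$, where a generic realization is forced to be infinite-dimensional precisely for the irrational values. Writing $\alpha = N/m$, the task becomes to express the flat matrix $\alpha \cdot I_m$ as a sum of $n$ projections $p_j$ of integer ranks $r_j$ with $\sum_j r_j = N$. Here one cannot simply peel off copies of the identity, since a scalar in $(0,1)$ is never a nonzero sum of projections (each summand satisfies $p_j \le \alpha I_m$, forcing $\alpha \ge 1$ whenever some $p_j \ne 0$), so the value $\alpha$ must be attained collectively. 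I would therefore invoke a frame-theoretic, Horn-type existence criterion for writing a scalar matrix as a sum of projections of prescribed ranks, and verify that the admissible rank data exist exactly when $\alpha$ lies in $\Sigma_n$. Matching these existence conditions with membership in $\Sigma_n$ --- equivalently, carrying out the explicit finite-dimensional constructions of \cite{KRS:Sums_2002} via the representation theory of the associated star-shaped graph --- is the technical heart of the argument, and is exactly what separates the rational interior points, realizable in finite dimensions, from the irrational ones that require infinite dimensions.
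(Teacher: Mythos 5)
Your forward direction is correct and is exactly what the paper does: the paper does not prove this theorem at all (it is imported verbatim from Kruglyak--Rabanovich--Samoilenko), and the only argument the authors supply is the remark immediately after the statement, namely the trace/dimension count showing the ``only if'' part --- which you reproduce faithfully, including the quantitative consequence $\dim(H)\ge b$ for $\alpha=a/b$ irreducible.

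The converse, however, contains a genuine gap rather than a proof. For rational $\alpha$ in the central interval $\big[\frac12(n-\sqrt{n^2-4n}),\frac12(n+\sqrt{n^2-4n})\big]$ you propose to ``invoke a frame-theoretic, Horn-type existence criterion for writing a scalar matrix as a sum of projections of prescribed ranks'' and then ``verify that the admissible rank data exist exactly when $\alpha\in\Sigma_n$.'' No such off-the-shelf criterion is available at the level of generality you need: determining precisely which scalars $\alpha$ (and which rank vectors) admit such decompositions \emph{is} the content of the theorem, and the known results on sums of projections with prescribed ranks do not reduce to a rank-counting condition for a fixed number $n$ of summands --- witness the fact that $\Sigma_n$ contains infinitely many isolated rational points accumulating at the interval endpoints, a structure no simple Horn-type inequality produces. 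The actual argument in \cite{KRS:Sums_2002} (sketched in the paper's Appendix) proceeds quite differently: one constructs explicit finite-dimensional realizations for $\alpha\in\Q\cap[3/2,2]$ when $n=5$ by ``sewing'' small blocks, extends to $\alpha\in[2,n-2]$ for general $n$, and then reaches the rest of the interval by iterating the linear and hyperbolic reflection functors $T$ and $S$ (the numerical maps $\Phi^{\pm}$), which come from the representation theory of the associated star-shaped quiver. Your sketch acknowledges this is ``the technical heart'' but defers it entirely to the reference, so as written the hard implication is asserted, not proved. (Your treatment of the discrete points --- e.g.\ the tight-frame realization of $\alpha=\frac{n}{n-1}$ --- is correct as far as it goes, but ``the remaining discrete points are handled by analogous constructions'' is likewise a placeholder: those points form an infinite sequence converging to the interval endpoint and each requires its own inductive construction.)
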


\noindent  Note that the ``only if'' part of the theorem above is trivial: Apply the standard trace on $B(H)$ to both sides of the equation $\sum_{j=1}^n p_j = \alpha \cdot I_H$, to obtain $\sum_{j=1}^n \mathrm{dim}(p_j)= \alpha \,  \mathrm{dim}(H)$.  This  argument also gives the following \emph{quantitative} result for rational values of $\alpha$: If $\alpha = a/b$ is irreducible, with $a,b$ positive integers, and if there exist projections $p_1, \dots, p_n$ on a Hilbert space $H$ such that $\sum_{j=1}^n p_j = \alpha \cdot I_H$, then $\mathrm{dim}(H) \ge b$. 

For each $n \ge 2$ and each $t \in [1/n,1]$, define the $n \times n$ matrix $A^{(n)}_t = \big[A^{(n)}_t(i,j)\big]_{i,j=1}^n$  by
\begin{equation} \label{eq:A}
A^{(n)}_t(i,j) = \begin{cases} \hspace{.6cm} t, 
& i=j,\\ \displaystyle{\frac{t(nt-1)}{n-1}}, & i \ne j. \end{cases}
\end{equation}

\begin{proposition} \label{prop:main}
Let $n \ge 2$ be an integer, let $1/n \le t \le 1$, and let $\alpha = nt$. 
\begin{enumerate}
\item  Let $\cA$ be a unital \Cs{} with a faithful tracial state $\tau$ and let $p_1, \dots, p_n$ be projections in $\cA$ satisfying $\tau(p_jp_i) = A^{(n)}_t(i,j)$, for all $1\leq i, j\leq n$. Then $\sum_{j=1}^n p_j = \alpha \cdot 1_\cA$. Furthermore, if $t$ is irrational, then $\cA$ is necessarily infinite dimensional. 
Respectively, if $\alpha = a/b$ is rational with $a,b$ positive integers and $a/b$ is irreducible, then $\cA$ has no representation on a Hilbert space of dimension less than $b$.
\item Conversely, let $\cA$ be a unital \Cs{} with a tracial state $\tau$, and let $p_1,  \dots, p_n$ be projections in $\cA$ satisfying $\sum_{j=1}^n p_j = \alpha \cdot 1_\cA$. Then there exist projections $\widetilde{p}_1, \dots, \widetilde{p}_n$ in some matrix algebra $M_m(\cA)$ over $\cA$ satisfying 
$$\sum_{j=1}^n \widetilde{p}_j = \alpha \cdot 1_{M_m(\cA)}, \qquad 
\widetilde{\tau}(\widetilde{p}_j\widetilde{p}_i) = A^{(n)}_t(i,j), \quad1\leq i, j\leq  n,$$
where  $\widetilde{\tau}$ is the normalized trace on $M_m(\cA)$ induced by $\tau$.
\end{enumerate}
\end{proposition}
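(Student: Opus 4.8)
The plan is to prove the two parts by quite different means: a short second-moment identity for (i), and a symmetrization over the symmetric group for (ii). For part (i), set $s = \sum_{j=1}^n p_j$ and show that the self-adjoint element $s - \alpha\cdot 1_\cA$ has vanishing second moment. Since $A^{(n)}_t$ is symmetric, the hypothesis and the trace property give $\tau(p_ip_j) = A^{(n)}_t(i,j)$ for all $i,j$, so that $\tau(s) = \sum_j t = nt = \alpha$ and $\tau(s^2) = \sum_{i,j}\tau(p_ip_j) = n\,t + n(n-1)\cdot\frac{t(nt-1)}{n-1} = \alpha + \alpha(\alpha-1) = \alpha^2$. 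Hence $\tau\big((s-\alpha\cdot 1_\cA)^2\big) = \tau(s^2) - 2\alpha\,\tau(s) + \alpha^2 = 0$, and faithfulness of $\tau$ forces $s = \alpha\cdot 1_\cA$. This is the entire content of the first assertion.

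The remaining claims in (i) follow from Theorem~\ref{thm:KRS} and its quantitative refinement, once the operator equation is transferred to a Hilbert-space representation. If $\cA$ were finite dimensional, I would represent it unitally and faithfully on a finite dimensional Hilbert space $H$; then $\sum_j p_j = \alpha\cdot I_H$ for honest projections on $H$, so $\alpha \in \Sigma_n \cap \Q$ by Theorem~\ref{thm:KRS}, whence $t = \alpha/n$ is rational — the contrapositive giving infinite-dimensionality when $t$ is irrational. For the dimension bound, given a nonzero representation of $\cA$ on a Hilbert space $H$ (restricted to its essential subspace, so that it is unital) with $\alpha = a/b$ irreducible, applying the standard trace to $\sum_j p_j = \alpha\cdot I_H$ yields $\sum_j \rank(p_j) = (a/b)\dim(H)$; as the left-hand side is an integer and $\gcd(a,b)=1$, it follows that $b$ divides $\dim(H)$, so $\dim(H) \ge b$.

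For part (ii), the idea is to average the given $n$-tuple over all of its permutations. Set $m = n!$, index the diagonal of $M_m(\cA) = M_{n!}(\cA)$ by $S_n$, and define $\widetilde{p}_j = \diag_{\sigma \in S_n}\big(p_{\sigma(j)}\big)$, a projection in $M_m(\cA)$. Then $\sum_{j=1}^n \widetilde{p}_j = \diag_\sigma\big(\sum_j p_{\sigma(j)}\big) = \alpha\cdot 1_{M_m(\cA)}$ at once. The only further input needed is the squared relation: from $\sum_k p_k = \alpha\cdot 1_\cA$ one gets $\sum_{k \ne l}\tau(p_kp_l) = \alpha^2 - \alpha$. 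Since $\sigma \mapsto \sigma(j)$ hits each index exactly $(n-1)!$ times, $\widetilde{\tau}(\widetilde{p}_j) = \frac{1}{n!}\sum_\sigma \tau(p_{\sigma(j)}) = \frac{1}{n}\sum_k \tau(p_k) = \frac{\alpha}{n} = t$; and since $\sigma \mapsto (\sigma(i),\sigma(j))$ hits each ordered pair of distinct indices exactly $(n-2)!$ times ($2$-transitivity of $S_n$), for $i \ne j$ we obtain $\widetilde{\tau}(\widetilde{p}_j\widetilde{p}_i) = \frac{1}{n!}\sum_\sigma \tau(p_{\sigma(j)}p_{\sigma(i)}) = \frac{1}{n(n-1)}\sum_{k\ne l}\tau(p_lp_k) = \frac{\alpha(\alpha-1)}{n(n-1)} = \frac{t(nt-1)}{n-1}$. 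These are precisely the diagonal and off-diagonal entries of $A^{(n)}_t$, as required.

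I expect the genuine crux to be the design of the symmetrization in (ii): one needs a single construction that simultaneously forces all first moments to the common value $t$ and all second moments to the common value $\frac{t(nt-1)}{n-1}$, with no assumption on the individual $\tau(p_j)$ or $\tau(p_ip_j)$. The point is that averaging over a $2$-transitive group converts the one global constraint $\sum_k p_k = \alpha\cdot 1_\cA$ into exactly these uniform moments. By contrast, part (i) is essentially the one-line moment identity $\tau((s-\alpha)^2)=0$, the only delicate step being the routine but necessary passage from the operator equation in $\cA$ to a Hilbert-space representation where Theorem~\ref{thm:KRS} and the rank-counting argument can be invoked.
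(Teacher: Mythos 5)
Your proof is correct and follows essentially the same route as the paper: part (i) via the vanishing second moment of $\sum_j p_j - \alpha\cdot 1$ combined with Theorem~\ref{thm:KRS} and the rank-counting argument, and part (ii) via the same symmetrization $\widetilde{p}_j = \bigoplus_{\sigma\in S_n} p_{\sigma(j)}$ in $M_{n!}(\cA)$. No discrepancies to report.
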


\begin{proof} (i). Set $q = (nt)^{-1} \sum_{j=1}^n p_j$. To show that $q=1_\cA$, it suffices to check that $1 = \tau(q) = \tau(q^2)$, since this will entail that $\tau((1-q)^*(1-q)) = \tau((1-q)^2) = 0$. These are straightforward calculations: Indeed,  $\tau(q) = (nt)^{-1} \sum_{j=1}^n \tau(p_j) = 1$, and 
$$
\tau(q^2) = (nt)^{-2}\Big(\sum_{j=1}^n \tau(p_j) + \sum_{i \ne j} \tau(p_ip_j)\Big) 
= (nt)^{-2} \left (nt + n(n-1) \cdot  \frac{t(nt-1)}{n-1}  \right ) = 1.
$$
If $t$ is irrational, then it follows  from (the easy part of) Theorem~\ref{thm:KRS} that $\cA$ does not have any finite dimensional representation on a Hilbert space, so $\cA$ must be infinite dimensional. Respectively, if $\alpha=a/b$ is rational with $a, b$ positive integers and $a/b$ irreducible, then $\cA$ has no representation on a Hilbert space of dimension less than $b$, by the (explicit) comment after the statement of Theorem~\ref{thm:KRS}.

(ii). We follow the same strategy as in the proof of \cite[Theorem 4.2]{DykPauPra:non-closure}. Let $m = n!$ and let $S_n$ denote the collection of all permutations of the set $\{1,2,\dots, n\}$. For $1\leq j\leq n$, set
$$\widetilde{p}_j = \bigoplus_{\sigma \in S_n} p_{\sigma(j)} \in \bigoplus_{\sigma \in S_n} \cA \subset M_m(\cA),$$
where the inclusion above is given by identifying the diagonal of $M_m(\cA)$ with the direct sum $\bigoplus_{\sigma \in S_n} \cA$ (for some enumeration of $S_n$).
It is easy to check that $\sum_{j=1}^n \widetilde{p}_j = \alpha \cdot 1_{M_m(\cA)}$. Moreover, for $1\leq i\leq n$,
$$\widetilde{\tau}(\widetilde{p}_i) = \frac{1}{n!} \sum_{\sigma \in S_n} \tau(p_{\sigma(i)}) = \frac{1}{n} \sum_{k=1}^n \tau(p_k) \colon \!\!= t_0,$$
and for $1\leq i \ne j\leq n$,
$$\widetilde{\tau}(\widetilde{p}_i\widetilde{p}_j) = \frac{1}{n!} \sum_{\sigma \in S_n} \tau(p_{\sigma(i)}p_{\sigma(j)}) = \frac{1}{n(n-1)} \sum_{k \ne \ell} \tau(p_k p_\ell) \colon \!\!= s_0.$$
Since  $\sum_{j=1}^n p_j = \alpha \cdot 1_\cA$, we deduce that $t_0 = \alpha/n = t$, and that
$$n^2t^2 = \alpha nt = \sum_{k=1}^n \tau(\alpha \cdot p_k) = \sum_{k=1}^n \tau\big(p_k \sum_{\ell=1}^n p_\ell\big) = \sum_{k, \ell=1}^n \tau(p_kp_\ell) = n t + n(n-1)s_0,$$
from which we conclude that $s_0 =(n-1)^{-1}t(nt-1)$, as desired.
\end{proof}

\noindent
Combining Theorem~\ref{thm:KRS} and Proposition~\ref{prop:main}, we  obtain the following:

\begin{proposition} \label{thm:maina} For $n \ge 2$ and  $1/n \le t \le 1$ we have:
\begin{enumerate}
\item $A_t^{(n)} \in \mathcal{D}(n)$ if and only if $A_t^{(n)} \in \overline{\mathcal{D}_{\mathrm{fin}}(n)}$ if and only if $t \in n^{-1} \Sigma_n$,
\item $A_t^{(n)} \in \mathcal{D}_{\mathrm{fin}}(n)$ if and only if $A_t^{(n)} \in \mathcal{D}_{\mathrm{matrix}}(n)$ if and only if  $t \in n^{-1} \Sigma_n \cap \Q$.
\end{enumerate}
Moreover, for $n \ge 5$,
\begin{itemize}
\item[\rm{(iii)}] $A_t^{(n)}$ belongs to $\overline{\mathcal{D}_{\mathrm{fin}}(n)} \setminus \mathcal{D}_{\mathrm{fin}}(n)$, for  $t \in n^{-1}\Sigma_n \setminus \Q = \Pi_n \setminus \Q$.
\item[\rm(iv)] If $t \in n^{-1}\Sigma_n \setminus \Q$ and $p_1, \dots, p_n$ are projections in a von Neumann algebra $(N,\tau)$ with faithful tracial state satisfying $A_t^{(n)}(i,j) = \tau(p_jp_i)$, for all $1\leq i,j\leq n$, then $N$ must  be of type II$_1$. 
\item[\rm(v)]  For each $A \in \overline{\mathcal{D}_{\mathrm{fin}}(n)}$, there exist projections $p_1, \dots, p_n$ in the ultrapower $\mathcal{R}^\omega$ of the hyperfinite type II$_1$ factor $\mathcal{R}$ such that $A(i,j) = \tau_{\mathcal{R}^\omega}(p_jp_i)$, for all $1\leq i,j\leq n$.
\end{itemize}
\end{proposition}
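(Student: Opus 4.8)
The plan is to establish the five items by assembling the two preceding propositions and Theorem~\ref{thm:KRS}, with the genuinely new content concentrated in item~(v), which provides the ultrapower representation that the other parts will lean on. I would begin with (i) and (ii), which are almost formal. For the forward direction of (i), suppose $A_t^{(n)} \in \mathcal{D}(n)$, realized by projections $p_1,\dots,p_n$ in some $(\mathcal{A},\tau)$; Proposition~\ref{prop:main}(i) immediately gives $\sum_j p_j = nt\cdot 1_{\mathcal{A}}$, so $nt = \alpha \in \Sigma_n$ by the definition of $\Sigma_n$, i.e. $t \in n^{-1}\Sigma_n$. Conversely, if $t \in n^{-1}\Sigma_n$, then $\alpha = nt \in \Sigma_n$, so projections summing to $\alpha\cdot I_H$ exist on some Hilbert space $H$; applying Proposition~\ref{prop:main}(ii) inside $B(H)$ produces projections in a matrix amplification with the prescribed second-order moments, placing $A_t^{(n)}$ in $\mathcal{D}(n)$ (and, when $H$ is finite-dimensional, in $\mathcal{D}_{\mathrm{matrix}}(n)$). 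The equivalence with membership in $\overline{\mathcal{D}_{\mathrm{fin}}(n)}$ in (i) will follow once (v) is in hand, since (v) shows $\overline{\mathcal{D}_{\mathrm{fin}}(n)} \subseteq \mathcal{D}(n)$, while the reverse containment of the specific matrices $A_t^{(n)}$ follows by rational approximation of $t$ together with (ii). Item~(ii) is the rational shadow of this argument: combine Proposition~\ref{prop:main}(ii) with the finite-dimensionality clause of Theorem~\ref{thm:KRS}, giving $A_t^{(n)} \in \mathcal{D}_{\mathrm{matrix}}(n) \Leftrightarrow \alpha \in \Sigma_n \cap \mathbb{Q} \Leftrightarrow t \in n^{-1}\Sigma_n \cap \mathbb{Q}$, and the sandwich $\mathcal{D}_{\mathrm{matrix}}(n) \subseteq \mathcal{D}_{\mathrm{fin}}(n) \subseteq \mathcal{D}(n)$ forces the middle equivalence.

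For (iii), I would simply read off the structure of $\Sigma_n$ recorded before Theorem~\ref{thm:KRS}. For $n \ge 5$, the irrational part of $n^{-1}\Sigma_n$ is exactly $\Pi_n \setminus \mathbb{Q}$, since the discrete portion of $\Sigma_n$ lies in $\mathbb{Q}$ and the only interval contributing irrationals, after scaling by $n^{-1}$, is $\Pi_n$. For such irrational $t$, part~(i) gives $A_t^{(n)} \in \mathcal{D}(n) = \overline{\mathcal{D}_{\mathrm{fin}}(n)}$ (using the closure identity from (i)/(v)), whereas part~(ii) denies membership in $\mathcal{D}_{\mathrm{fin}}(n)$ because $t \notin \mathbb{Q}$; hence $A_t^{(n)}$ witnesses the non-closure. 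Item~(iv) is then a type-classification remark: given projections $p_1,\dots,p_n$ in $(N,\tau)$ realizing such an irrational $A_t^{(n)}$, Proposition~\ref{prop:main}(i) shows $N$ is infinite-dimensional, while the diagonal entries $\tau(p_j) = t$ are irrational, so $\tau$ cannot take values in a lattice $\tfrac{1}{k}\mathbb{Z}$ and $N$ can have no type~I summand with matricial trace values; combined with finiteness of $\tau$ this leaves only type~II$_1$.

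The main obstacle, and the one deserving real care, is (v). The strategy I would use is a standard ultraproduct argument. Fix $A \in \overline{\mathcal{D}_{\mathrm{fin}}(n)}$ and choose matrices $A_k \in \mathcal{D}_{\mathrm{fin}}(n)$ with $A_k \to A$; each $A_k$ is realized by projections $p_1^{(k)},\dots,p_n^{(k)}$ in some finite-dimensional $(\mathcal{A}_k,\tau_k)$. By tensoring with a matrix algebra and using that finite-dimensional tracial von Neumann algebras embed trace-preservingly into $\mathcal{R}$, I would arrange all the $p_j^{(k)}$ to live in $(\mathcal{R},\tau_{\mathcal{R}})$. Forming the tracial ultrapower $\mathcal{R}^\omega$ along a free ultrafilter $\omega$, set $p_j = [(p_j^{(k)})_k] \in \mathcal{R}^\omega$; since each coordinate is a projection, $p_j$ is a projection in $\mathcal{R}^\omega$. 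The trace on $\mathcal{R}^\omega$ computes $\tau_{\mathcal{R}^\omega}(p_j p_i) = \lim_{k \to \omega} \tau_{\mathcal{R}}(p_j^{(k)} p_i^{(k)}) = \lim_{k \to \omega} A_k(i,j) = A(i,j)$, which is precisely the required identity. The delicate points to verify are that the ultrapower trace is faithful and normal (so that the resulting matrix genuinely lies in the framework defining $\mathcal{D}(n)$), that products and traces pass to the ultralimit as claimed, and that the embedding step introduces no obstruction to preserving the trace; these are routine but must be stated. This same construction yields $\overline{\mathcal{D}_{\mathrm{fin}}(n)} \subseteq \mathcal{D}(n)$, closing the loop with the closure assertions invoked in (i) and (iii).
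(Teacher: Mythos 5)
Your overall route is the same as the paper's: (i)--(iii) are assembled from Proposition~\ref{prop:main} and Theorem~\ref{thm:KRS} in the same order as the printed proof (including the observation that an irrational $t\in n^{-1}\Sigma_n$ must lie in $\Pi_n$, so that rational approximants can be chosen inside $n^{-1}\Sigma_n$), and your (v) is the identical trace-preserving embedding of the finite dimensional algebras into $\mathcal{R}$ followed by the ultrapower limit. Two local justifications, however, need repair.

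First, in (iv) you argue that since $\tau(p_j)=t$ is irrational, ``$\tau$ cannot take values in a lattice $\tfrac1k\mathbb{Z}$ and $N$ can have no type I summand with matricial trace values.'' This implication fails: a finite type I von Neumann algebra can contain projections of irrational trace (e.g.\ $\mathbb{C}\oplus\mathbb{C}$ with weights $(\alpha,1-\alpha)$, $\alpha$ irrational), so irrationality of the diagonal entries does not exclude a type I summand, and mere infinite-dimensionality of $N$ does not either ($L^\infty[0,1]$ is infinite dimensional and type I). What is actually needed --- and what the proof of Proposition~\ref{prop:main}~(i) establishes via the easy direction of Theorem~\ref{thm:KRS} --- is that $N$ admits \emph{no finite dimensional representation whatsoever}: a type I direct summand of a finite von Neumann algebra always yields such a representation $\pi$, and then $\sum_j\pi(p_j)=\alpha I$ on a finite dimensional space forces $\alpha=nt\in\mathbb{Q}$, a contradiction. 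This is the paper's argument and is the correct fix. Second, in (iii) you write $\mathcal{D}(n)=\overline{\mathcal{D}_{\mathrm{fin}}(n)}$; as a set identity this is (a reformulation of) the Connes Embedding Problem and must not be asserted. You only need, and part (i) only delivers, the equivalence of membership in $\mathcal{D}(n)$ and in $\overline{\mathcal{D}_{\mathrm{fin}}(n)}$ for the particular matrices $A_t^{(n)}$, and that suffices for (iii). With these two repairs the proposal coincides with the paper's proof.
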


\begin{proof} Suppose that $n \ge 2$ and  $1/n \le t \le 1$. From the first part of Proposition~\ref{prop:main} (i) we see that $A_t^{(n)} \in \mathcal{D}(n)$ implies $t \in n^{-1} \Sigma_n$, while its second part shows that $A_t^{(n)} \in \mathcal{D}_{\mathrm{fin}}(n)$ implies $t \in n^{-1} \Sigma_n \cap \Q$. Proposition~\ref{prop:main} (ii) gives that $t \in n^{-1} \Sigma_n$ implies $A_t^{(n)} \in \mathcal{D}(n)$.

Let $t \in n^{-1} \Sigma_n \cap \Q$. Then by Theorem~\ref{thm:KRS}, there exist projections $p_1,\dots, p_n$ on a finite dimensional Hilbert space $H$ satisfying $\sum_{j=1}^n p_j = \alpha \cdot I_H$, where $\alpha = nt$. Identifying the bounded operators on $H$ with a full matrix algebra, we may assume that the projections $p_j$ belong to  $M_k(\C)$, for some $k \ge 2$. By Proposition~\ref{prop:main}, we can find projections $\widetilde{p}_1, \dots, \widetilde{p}_n$ in some larger matrix algebra $M_{m}(\C)$ with normalized trace $\mathrm{tr}$, satisfying $A^{(n)}_t(i,j) = \mathrm{tr}(\widetilde{p}_i\widetilde{p}_j)$, for all $1\leq i,j\leq n$. This shows that $A^{(n)}_t$ belongs to $\mathcal{D}_{\mathrm{matrix}}(n)$. As $\mathcal{D}_{\mathrm{matrix}}(n) \subset \mathcal{D}_{\mathrm{fin}}(n)$, this completes the proof of (ii). 

To complete the proof of (i) we must show that $A_t^{(n)} \in \overline{\mathcal{D}_{\mathrm{fin}}(n)}$ when $t \in n^{-1} \Sigma_n$. This follows directly from (ii) when $t$ is rational. Suppose that $t$ is irrational. As remarked below \eqref{eq:Pi}, $n^{-1}\Sigma_n \setminus \Pi_n \subset \Q$, for all $n \ge 2$. Hence $t \in \Pi_n \setminus \Q$. We can therefore find a sequence $\{t_k\}_{k=1}^\infty$  of rational numbers in the interval $\Pi_n$ converging to $t$. Then $A^{(n)}_{t_k} \to A^{(n)}_t$, as $k\rightarrow \infty$, and $A^{(n)}_{t_k}$ belongs to $\mathcal{D}_{\mathrm{fin}}(n)$, for each $k\geq 1$, by (i). This shows that $A^{(n)}_t$ belongs to the closure of $\mathcal{D}_{\mathrm{fin}}(n)$.

(iii) follows from (i) and (ii). 

(iv). The finite von Neumann algebra $N$ can have no representation on a finite dimensional Hilbert space, by the second part of Proposition~\ref{prop:main} (i), whence $N$ must be of type II$_1$.  

(v). Suppose that $A$  is the limit of a sequence $\{A_k\}_{k=1}^\infty$ of matrices in $\mathcal{D}_{\mathrm{fin}}(n)$. Since each finite dimensional \Cs{} with a distinguished trace can be embedded in a trace preserving way into the hyperfinite type II$_1$ factor $\mathcal{R}$, we can find projections $p^{(k)}_1, \dots, p^{(k)}_n$ in $\mathcal{R}$ satisfying $\tau_\mathcal{R}(p^{(k)}_jp^{(k)}_i) = A_k(i,j)$, for all $1\leq i,j\leq n$. Let $p_i$ be the image in $\mathcal{R^\omega}$ of the sequence $\{p^{(k)}_i\}_{k=1}^\infty\in \ell^\infty(\mathcal{R})$. Then 
$$\tau_{\mathcal{R}^\omega}(p_jp_i) = \lim_\omega \tau_\mathcal{R}(p^{(k)}_jp^{(k)}_i) = \lim_\omega A_k(i,j) = A(i,j),$$
for all $1\leq i,j\leq n$, as wanted.
\end{proof}

\noindent By Proposition~\ref{thm:maina} (iii) and the fact that $\Pi_n$ is an interval with non-empty interior, when $n \ge 5$, cf.\ \eqref{eq:Pi}, we obtain the following theorem:

\begin{theorem} \label{thm:mainb}
The set $\mathcal{D}_{\mathrm{fin}}(n)$ is non-compact, when $n \ge 5$.
\end{theorem}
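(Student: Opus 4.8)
The plan is to deduce non-compactness of $\mathcal{D}_{\mathrm{fin}}(n)$, for $n \ge 5$, directly from the already-established part (iii) of Proposition~\ref{thm:maina}, together with the closedness/compactness properties of $\mathcal{D}(n)$ recorded earlier in the section. Since a subset of a metric space fails to be compact as soon as it fails to be closed (assuming it is bounded, which $\mathcal{D}_{\mathrm{fin}}(n) \subset \mathcal{D}(n)$ certainly is, because $\mathcal{D}(n)$ is compact), it suffices to exhibit a single matrix lying in $\overline{\mathcal{D}_{\mathrm{fin}}(n)} \setminus \mathcal{D}_{\mathrm{fin}}(n)$.

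First I would invoke \eqref{eq:Pi} and the remark immediately following it: for $n \ge 5$, the set $\Pi_n$ is an interval with non-empty interior, and $n^{-1}\Sigma_n \setminus \Pi_n \subseteq \Q$. Consequently $\Pi_n$ contains irrational numbers; fix any such $t \in \Pi_n \setminus \Q$. The inclusion $\Pi_n \subset n^{-1}\Sigma_n$ (which holds because $\Pi_n = n^{-1}[\tfrac12(n-\sqrt{n^2-4n}),\tfrac12(n+\sqrt{n^2-4n})]$ is exactly the rescaled continuous part of $\Sigma_n$) guarantees $t \in n^{-1}\Sigma_n \setminus \Q$. Part (iii) of Proposition~\ref{thm:maina} then says precisely that the associated matrix $A_t^{(n)}$ belongs to $\overline{\mathcal{D}_{\mathrm{fin}}(n)} \setminus \mathcal{D}_{\mathrm{fin}}(n)$.

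The existence of such a point shows $\mathcal{D}_{\mathrm{fin}}(n) \ne \overline{\mathcal{D}_{\mathrm{fin}}(n)}$, so $\mathcal{D}_{\mathrm{fin}}(n)$ is not closed. Finally I would note that $\mathcal{D}_{\mathrm{fin}}(n)$ sits inside the compact (hence bounded) set $\mathcal{D}(n)$; a bounded, non-closed subset of the finite-dimensional space of $n \times n$ matrices cannot be compact (a compact subset of a Hausdorff space is closed). This yields the claimed non-compactness.

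There is essentially no obstacle here, since the substantive content has already been carried out in Proposition~\ref{thm:maina}: the real work is the combination of Theorem~\ref{thm:KRS} (rationality of admissible $\alpha$ in finite dimensions) with the structure of $\Sigma_n$ describing its continuous interval, both of which are assumed. The only point requiring a word of care is confirming that $\Pi_n$ genuinely contains irrationals, which is immediate once one knows it has non-empty interior; this is exactly what \eqref{eq:Pi} provides for $n \ge 5$. Thus the proof is a short formal deduction rather than a new computation.
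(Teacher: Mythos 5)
Your argument is correct and coincides with the paper's own proof: both deduce non-compactness from Proposition~\ref{thm:maina}~(iii) by observing that $\Pi_n$ has non-empty interior for $n \ge 5$, hence contains an irrational $t$, so that $A_t^{(n)} \in \overline{\mathcal{D}_{\mathrm{fin}}(n)} \setminus \mathcal{D}_{\mathrm{fin}}(n)$ and the set fails to be closed. The only difference is that you spell out the elementary closing step (non-closed implies non-compact), which the paper leaves implicit.
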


\begin{remark} \label{rem:A}
It is shown in Proposition~\ref{thm:maina} above that one can realize the $n \times n$  matrix $A_t^{(n)}$ using projections in $\mathcal{R}^\omega$, for all $n \ge 2$ and all $t \in n^{-1}\Sigma_n$. If  moreover $t$ is rational, then $A_t^{(n)}$ can be realized using projections in some matrix algebra, by Proposition~\ref{prop:main} (ii).

Using Proposition~\ref{prop:main}, this also shows that for each $n \ge 2$ and for each $\alpha$ in $\Sigma_n$, one can find an $n$-tuple of projections summing up to $\alpha \cdot 1_N$ in \emph{some} type II$_1$ factor $N$, e.g., $N= \mathcal{R}^\omega$. One can also reach this conclusion directly from Theorem~\ref{thm:KRS}, using  an ultraproduct argument as in the  proof of Proposition~\ref{thm:maina} (v).

In Theorem~\ref{thm:hyprealization} in the Appendix by N.\ Ozawa it is shown that for each $n \ge 5$ and each $\alpha \in \big({\textstyle{\frac12}}(n-\sqrt{n^2-4n}), {\textstyle{\frac12}}(n+\sqrt{n^2-4n})\big)$, one can find an $n$-tuple of projections in $\mathcal{R}$ summing up to $\alpha \cdot 1_{\mathcal{R}}$. It follows that the matrix $A^{(n)}_t$ can be realized using projections in $\mathcal{R}$ for all $t$ in  $n^{-1}\Sigma_n$, except, possibly, for the endpoints of the interval $\big[{\textstyle{\frac12}}(1-\sqrt{1-4/n}), {\textstyle{\frac12}}(1+\sqrt{1-4/n})\big]$.
\end{remark}

\noindent
Recall, e.g., from \cite[Section 2]{DykPauPra:non-closure}, that for $n, k\geq 2$, the set $C_{{qc}}(n,k)$ consists of $nk \times nk$ quantum correlation matrices $\big[(p(i,j | v,w)\big]_{i,j,v,w}$ with entries given by
$$p(i,j | v,w) = \big\langle P_{v,i}Q_{w,j} \psi, \psi\big\rangle, \quad 1\leq i,j\leq k, \; \;  1\leq v,w\leq n,$$
where, for each $v$ and $w$, $\{P_{v,i}\}_{i=1}^k$ and $\{Q_{w,j}\}_{j=1}^k$  are projection-valued measures on some Hilbert space $H$, satisfying $P_{v,i}Q_{w,j} =Q_{w,j}P_{v,i}$, for all $i,j,v,w$, and where $\psi$ is a unit vector in $H$. 
Let $C_{{q}}(n,k)$ be the same set of quantum correlation matrices, but with the additional assumption that $H=H_A \otimes H_B$, for some {\em finite dimensional} Hilbert spaces $H_A$ and $H_B$, and $P_{v,i}$ belongs to $B(H_A) \otimes I_{H_B}$, while $Q_{w,j}$ belongs to $I_{H_A} \otimes B(H_B)$. The closure of the set $C_{{q}}(n,k)$ is denoted by $C_{{qa}}(n,k)$. We have the following inclusions: $C_q(n,k) \subseteq C_{qa}(n,k) \subseteq C_{qc}(n,k)$. 
Furthemore, the sets of \emph{synchronous} correlation matrices, denoted by $C^s_{{q}}(n,k)$,  $C^s_{{qa}}(n,k)$, and  $C^s_{{qc}}(n,k)$, respectively, consist of those quantum correlation matrices $\big[(p(i,j | v,w)\big]_{i,j,v,w}$ in $C_{{q}}(n,k)$,  $C_{{qs}}(n,k)$, and  $C_{{qc}}(n,k)$, respectively, where $p(i,j | v,v) = 0$, whenever $i \ne j$. 

We use Theorem~\ref{thm:mainb} and Proposition~\ref{prop:PSSTW} below from \cite{PSSTW:JFA-2016} to give a shorter proof of  \cite[Theorem 4.2]{DykPauPra:non-closure} of Dykema, Paulsen and Prakash, which, again, was refining Slofstra's result (in the general, non-synchronous case) from \cite{Slofstra:non-closed}. For this, using the notation from \cite{PSSTW:JFA-2016}, for $n,k \ge 2$ let $\mathcal{D}_q^s(n,k)$ be the set of matrices $\big[\tau(e_{v,i}e_{w,j})\big]_{i,j,v,w}$, where $e_{v,i}$ are projections in a finite dimensional von Neumann algebra with a (faithful) tracial state $\tau$, satisfying $\sum_{i=1}^k e_{v,i} = 1$, for all $1\leq v\leq n$. Note that $\mathcal{D}_q^s(n,k)$  is a subset of $\mathcal{D}(nk)$.

\begin{proposition}[Paulsen, Severini, Stahlke, Todorov and Winter, \cite{PSSTW:JFA-2016}] \label{prop:PSSTW}
For all $n,k \ge 2$, one has  $C_q^s(n,k) =\mathcal{D}_q^s(n,k)$.
\end{proposition}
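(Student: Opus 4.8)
The plan is to prove the two inclusions $\mathcal{D}_q^s(n,k) \subseteq C_q^s(n,k)$ and $C_q^s(n,k) \subseteq \mathcal{D}_q^s(n,k)$ separately, following the strategy of \cite{PSSTW:JFA-2016}. For the first inclusion I would start from a finite-dimensional von Neumann algebra $M$ with faithful tracial state $\tau$ and projections $e_{v,i}$ satisfying $\sum_{i=1}^k e_{v,i}=1$ for each $v$, and realize the matrix $\big[\tau(e_{v,i}e_{w,j})\big]$ as a quantum correlation via the GNS (standard form) construction. Writing $M=\bigoplus_\ell M_{n_\ell}(\C)$ with weights $t_\ell>0$, the space $L^2(M,\tau)$ is a direct sum of Hilbert--Schmidt blocks $\C^{n_\ell}\otimes\overline{\C^{n_\ell}}$, which embeds diagonally into $H_A\otimes H_B$ with $H_A=H_B=\bigoplus_\ell \C^{n_\ell}$. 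I would take $\psi$ to be the trace vector $\widehat{1}$ (a weighted maximally entangled vector in each block), put $P_{v,i}=e_{v,i}\in B(H_A)$, and let $Q_{w,j}$ be the image of $e_{w,j}$ under the canonical anti-automorphism (transpose) in $B(H_B)$. These are projection-valued measures acting on the two tensor legs, hence automatically commuting, and a block-by-block computation gives $\langle (P_{v,i}\otimes Q_{w,j})\psi,\psi\rangle=\tau(e_{v,i}e_{w,j})$; synchronicity is immediate since $e_{v,i}e_{v,j}=0$ for $i\ne j$. The only delicate point here is exhibiting a genuine tensor factorization $H_A\otimes H_B$: one cannot in general reduce to a single full matrix algebra, because the weights $t_\ell$ may be irrational, but the diagonal embedding above resolves this.

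For the converse inclusion $C_q^s(n,k)\subseteq \mathcal{D}_q^s(n,k)$, suppose $p(i,j\mid v,w)=\langle (P_{v,i}\otimes Q_{w,j})\psi,\psi\rangle$ on a finite-dimensional $H_A\otimes H_B$, with $\psi$ a unit vector and $p$ synchronous. The heart of the argument is the vector identity $(P_{v,i}\otimes I)\psi=(I\otimes Q_{v,i})\psi$, which I would obtain from the single computation
\[ \sum_{i=1}^k \big\| (P_{v,i}\otimes I)\psi-(I\otimes Q_{v,i})\psi\big\|^2 = 1-2+1 = 0, \]
where the three summands arise from expanding the squares, using that $\{P_{v,i}\}_i$ and $\{Q_{v,i}\}_i$ are PVMs and that the cross term collapses to $\sum_i\langle(P_{v,i}\otimes Q_{v,i})\psi,\psi\rangle=\sum_{i,j}\langle(P_{v,i}\otimes Q_{v,j})\psi,\psi\rangle=1$ by synchronicity. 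Note this uses neither invertibility nor a restriction to supports.

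Having this identity, I would identify $\psi$ with a matrix $S$ under $H_A\otimes H_B\cong \mathrm{HS}$, so that $(X\otimes Y)\psi\leftrightarrow XSY^{\mathsf T}$; the identity then reads $P_{v,i}S=SQ_{v,i}^{\mathsf T}$. Since $Q_{v,i}$ is self-adjoint one has $Q_{v,i}^{\mathsf T}=\overline{Q_{v,i}}$, so multiplying on the right by $S^*$ and comparing with the adjoint equation gives $P_{v,i}\rho=\rho P_{v,i}$, where $\rho=SS^*$ is the reduced density operator on $H_A$. Thus $\rho$ commutes with the finite-dimensional von Neumann algebra $\mathcal{A}\subseteq B(H_A)$ generated by the $P_{v,i}$, whence $\tau(\,\cdot\,)=\Tr(\rho\,\cdot\,)$ is a tracial state on $\mathcal{A}$ (by cyclicity of $\Tr$), and using the identity once more, $p(i,j\mid v,w)=\langle(P_{v,i}P_{w,j}\otimes I)\psi,\psi\rangle=\tau(P_{v,i}P_{w,j})$. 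To arrange faithfulness I would compress by the central support projection $z$ of $\tau$ in $\mathcal{A}$; then $z\mathcal{A}$ is a finite-dimensional von Neumann algebra on which $\tau$ is a faithful trace, the $zP_{v,i}$ form PVMs, and $\tau(zP_{v,i}\cdot zP_{w,j})=\tau(P_{v,i}P_{w,j})=p(i,j\mid v,w)$, exhibiting $p\in\mathcal{D}_q^s(n,k)$.

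I expect the main obstacle to be the trace property of the state $\omega(X)=\langle(X\otimes I)\psi,\psi\rangle$ on $\mathcal{A}$, equivalently the commutation $[\rho,P_{v,i}]=0$: a direct attempt to verify $\omega(XY)=\omega(YX)$ on products of generators runs in a circle (it merely transfers the claim to Bob's algebra). The clean way out is precisely to first extract the vector identity above from synchronicity and then transport it, via the Hilbert--Schmidt identification, into the operator commutation relation, after which everything else is bookkeeping.
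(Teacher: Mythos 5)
Your argument is correct, but note that the paper offers no proof of this proposition at all: it is quoted verbatim from the cited reference \cite{PSSTW:JFA-2016}, so there is nothing internal to compare against. Both directions of your proof (the GNS/maximally-entangled realization of a tracial second-moment matrix, and the extraction of the vector identity $(P_{v,i}\otimes I)\psi=(I\otimes Q_{v,i})\psi$ from synchronicity followed by the commutation of the reduced density matrix with Alice's algebra) are the standard argument of that reference and check out as written.
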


\begin{theorem}[Dykema, Paulsen and Prakash, \cite{DykPauPra:non-closure}] \label{thm:Slofstra}
The set $C_q^s(n,2)$ of synchronous quantum correlation matrices  is non-compact, for all $n \ge 5$.
\end{theorem}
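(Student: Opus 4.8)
The goal is to prove that $C_q^s(n,2)$ is non-compact for all $n \ge 5$. The strategy is to connect the synchronous correlation sets to the projection-moment sets $\mathcal{D}_{\mathrm{fin}}$ studied earlier, and then invoke the already-established non-compactness of $\mathcal{D}_{\mathrm{fin}}(n)$ from Theorem~\ref{thm:mainb}. By Proposition~\ref{prop:PSSTW}, we have the identification $C_q^s(n,2) = \mathcal{D}_q^s(n,2)$, so it suffices to show that $\mathcal{D}_q^s(n,2)$ is non-compact. The plan is to construct an explicit affine map relating a tuple of projections $p_1,\dots,p_n$ (underlying a matrix in $\mathcal{D}_{\mathrm{fin}}(n)$) to a tuple of \emph{binary} PVMs $\{e_{v,1}, e_{v,2}\}$ (underlying a matrix in $\mathcal{D}_q^s(n,2)$), and to argue that this map is a homeomorphism onto a suitable slice, so that it transfers non-closure from one side to the other.

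The key construction is that a single projection $p_v$ naturally yields a two-outcome PVM by setting $e_{v,1} = p_v$ and $e_{v,2} = 1 - p_v$, which satisfies $e_{v,1} + e_{v,2} = 1$ as required for synchronicity in the $k=2$ case. Conversely, any binary PVM recovers a projection via $p_v := e_{v,1}$. First I would write out how the entries $\tau(e_{v,i} e_{w,j})$ of a matrix in $\mathcal{D}_q^s(n,2)$ are determined by the entries $\tau(p_w p_v)$ of the corresponding matrix in $\mathcal{D}_{\mathrm{fin}}(n)$: using $e_{v,2} = 1 - e_{v,1}$ and linearity of $\tau$, each of the four blocks $\tau(e_{v,i}e_{w,j})$ is an affine function of $\tau(p_v)$, $\tau(p_w)$, and $\tau(p_w p_v)$. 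This gives an explicit continuous map $\Phi \colon \mathcal{D}_{\mathrm{fin}}(n) \to \mathcal{D}_q^s(n,2)$, and the inverse map (reading off the $(v,1),(w,1)$ block) is also continuous, so $\Phi$ is a homeomorphism onto its image, and it identifies $\mathcal{D}_q^s(n,2)$ (or a canonical copy of it) with $\mathcal{D}_{\mathrm{fin}}(n)$.

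Once this bijective affine correspondence is in place, non-compactness transfers directly. Concretely, I would take the matrix $A_t^{(n)} \in \overline{\mathcal{D}_{\mathrm{fin}}(n)} \setminus \mathcal{D}_{\mathrm{fin}}(n)$ guaranteed by Proposition~\ref{thm:maina}(iii) for $t \in \Pi_n \setminus \Q$ (using $n \ge 5$), push it through $\Phi$, and check that the image lies in $\overline{C_q^s(n,2)} \setminus C_q^s(n,2)$. The point is that if the image $\Phi(A_t^{(n)})$ were realizable by binary PVMs in a finite-dimensional von Neumann algebra, then reading off $e_{v,1}$ would realize $A_t^{(n)}$ itself in that same finite-dimensional algebra, contradicting $A_t^{(n)} \notin \mathcal{D}_{\mathrm{fin}}(n)$; meanwhile $\Phi$ carries the approximating rational sequence $A_{t_k}^{(n)} \in \mathcal{D}_{\mathrm{fin}}(n)$ to genuine elements of $C_q^s(n,2)$ converging to $\Phi(A_t^{(n)})$, witnessing that the limit point is in the closure but not the set.

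The main obstacle I anticipate is bookkeeping rather than conceptual: one must verify carefully that the correspondence between finite-dimensional realizations is \emph{exact} in both directions — i.e., that a finite-dimensional realization of the target synchronous matrix does not require any larger or different algebra than a finite-dimensional realization of $A_t^{(n)}$, so that the contradiction with $A_t^{(n)} \notin \mathcal{D}_{\mathrm{fin}}(n)$ is genuine. This requires confirming that the passage $p_v \mapsto \{p_v, 1-p_v\}$ and its inverse preserve the ambient algebra and its trace, and that the affine relations between the two families of matrix entries are invertible (which they are, since $\tau(p_v)$ appears as a diagonal synchronous entry and $\tau(p_w p_v)$ is recovered from the $(v,1),(w,1)$ block). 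Handling the indexing of the $nk \times nk$ matrix versus the $n \times n$ matrix cleanly, and making sure the convexity/closure statements match up with Proposition~\ref{prop:PSSTW}'s identification, is where the care is needed.
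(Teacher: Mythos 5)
Your proposal is correct and follows essentially the same route as the paper: pass from each projection $p_v$ to the binary PVM $\{p_v,\,1-p_v\}$, use Proposition~\ref{prop:PSSTW} to identify the resulting moment matrices with elements of $C_q^s(n,2)$, and transfer the non-closure of $\mathcal{D}_{\mathrm{fin}}(n)$ from Theorem~\ref{thm:mainb} by observing that the $(1,1)$-block of any finite-dimensional realization would put $A_t^{(n)}$ back in $\mathcal{D}_{\mathrm{fin}}(n)$. The only cosmetic difference is that you package the correspondence as an explicit affine homeomorphism $\Phi$ and work purely at the level of matrices, whereas the paper realizes the limit matrix concretely by projections in $\mathcal{R}^\omega$ before forming the PVMs; both arguments are sound.
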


\begin{proof} Use Theorem~\ref{thm:mainb} to find a matrix $A$ in $\mathcal{D}(n) \setminus \mathcal{D}_{\mathrm{fin}}(n)$ and a sequence $\{A_k\}_{k=1}^\infty$ of matrices in $ \mathcal{D}_{\mathrm{fin}}(n)$ converging to $A$. Let $p_1, \dots, p_n$ be projections in $\mathcal{R}^\omega$ such that $\tau_{\mathcal{R}^\omega}(p_jp_i) = A(i,j)$, for all $i,j$, cf.\ Proposition~\ref{thm:maina} (v), and let further $p^{(k)}_1, \dots, p^{(k)}_n$ be projections in some matrix algebra $M_{m_k}(\C)$ with normalized trace $\mathrm{tr}_{m_k}$ such that $\mathrm{tr}_{m_k}(p^{(k)}_jp^{(k)}_i) = A_k(i,j)$, $1\leq i, j\leq n$.

Set $e_{0,v} = p_v$ and $e_{1,v} = 1-p_v$ in $\mathcal{R}^\omega$, and set $e^{(k)}_{0,v} = p_v^{(k)}$ and $e^{(k)}_{1,v} = 1- p_v^{(k)}$ in $M_{m_k}(\C)$, for all $k \ge 1$ and $1\leq v \leq n$. It then follows that
$$\lim_{k \to \infty} \big[\mathrm{tr}_{m_k}(e^{(k)}_{v,i}e^{(k)}_{w,j})\big]_{i, j, v, w} = \big[\tau_{\mathcal{R}^\omega}(e_{v,i}e_{w,j})\big]_{i, j, v, w},$$
and each of the matrices
$\big[\mathrm{tr}_{m_k}(e^{(k)}_{v,i}e^{(k)}_{w,j})\big]_{i, j, v, w}$ belongs to  $\mathcal{D}_q^s(n,2) = C_q^s(n,2)$, cf.\ Proposition~\ref{prop:PSSTW}.  However, the matrix $\big[\tau(e_{v,i}e_{w,j})\big]_{i, j, v, w}$  itself does not  belong to $C_q^s(n,2)$, since the matrix $A = \big[\tau(e_{v,0}e_{w,0})\big]_{v,w}$ does not belong to $\mathcal{D}_{\mathrm{fin}}(n)$.
\end{proof}

\noindent
Note that the set $C_q(n,2)$ of (non-synchronous) quantum correlation matrices contains  $C_q^s(n,2)$ as a relatively closed subset, which shows that $C_q(n,2)$ also is non-closed, when $n \ge 5$. 

We end this section with a remark on the matrices $A_t^{(n)}$ defined in \eqref{eq:A}.

\begin{remark}  \label{rem:B}
For each integer $n \ge 2$ and for all $s,t \in [0,1]$, consider the $n \times n$ matrix $A^{(n)}_{t,s}$, whose diagonal entries all are equal to $t$ and whose off-diagonal entries are all equal to $s$. Note that $A^{(n)}_{t} = A^{(n)}_{t,s}$, with $s := t(nt-1)/(n-1)$, when $t \in [1/n,1]$. The purpose of this remark is to describe the set of ``admissible pairs'' $(t,s)$, for which $A^{(n)}_{t,s}$ belongs to $\mathcal{D}(n)$, and to show that $s = t(nt-1)/(n-1)$ is the smallest number for which $(t,s)$ is such an admissible pair, for each fixed $t \in n^{-1}\Sigma_n$, 

Note first that for each fixed $t \in [0,1]$, the set $I^{(n)}_t$ of those $s \in [0,1]$, for which $(t,s)$ is admissible, is a closed interval. This follows by convexity and compactness of the set $\mathcal{D}(n)$.

Fix $t \in [0,1]$, take a projection $p$ of trace $t$ in some finite von Neumann algebra, and let  $p_1= \cdots = p_n = p$. The matrix $A^{(n)}_{t,t}$ is then equal to  $\big[\tau(p_jp_i)\big]_{i,j=1}^n$, and therefore belongs to $\mathcal{D}(n)$. Moreover, since $\tau(pq) \le \tau(p)$, whenever $p,q$ are projections in a von Neumann algebra with tracial state $\tau$, we conclude that  $t=\max I^{(n)}_t$.

For $0 \le t \le 1/n$ we can find pairwise orthogonal projections $p_1,\dots, p_n$ in any type II$_1$ factor $M$ with $\tau_M(p_j)=t$, for all $j$. The corresponding $n \times n$ matrix $\big[\tau(p_jp_i)\big]_{i,j=1}^n$ is equal to $A^{(n)}_{t,0}$. Hence $0$ belongs to $I^{(n)}_t$. This shows that $I^{(n)}_t = [0,t]$, when $0 \le t \le 1/n$.

Let $s,t \in [0,1]$, and suppose that $(t,s)$ is an admissible pair. Let $p_1, \dots, p_n$ be projections in a finite von Neumann algebra with tracial state $\tau$ such that $\big[\tau(p_jp_i)\big]_{i,j=1}^n = A^{(n)}_{t,s}$. Put $q_j = 1-p_j$, $1 \le j \le n$. Then $\big[\tau(q_jq_i)\big]_{i,j=1}^n = A^{(n)}_{1-t,1-2t+s}$, which shows that $(1-t,1-2t+s)$ is an admissible pair. The map given by $(t,s) \longmapsto (1-t,1-2t+s)$ is involutive, and  therefore it maps the set of admissible pairs in $[0,1]^2$ onto itself. In particular, this involution maps 
$\{t\} \times I_t^{(n)}$ onto $\{1-t\} \times I_{1-t}^{(n)}$. Combining this fact with the result of the previous paragraph, we obtain that $I_t^{(n)} = [2t-1,t]$, when $1-1/n\le t \le 1$.

Consider finally the case where $t \in [1/n,1-1/n]$. Suppose that $s \in I_t$, and let $p_1, \dots, p_n$ be pro\-jec\-tions in some finite von Neumann algebra with tracial state $\tau$ such that $\big[\tau(p_jp_i)\big]_{i,j=1}^n = A^{(n)}_{t,s}$. Then
$$nt + n(n-1)s =  \tau\Big( (\sum_{i=1}^n p_i)^2\Big) \ge \Big(\tau(\sum_{i=1}^n p_i)\Big)^2 = n^2 t^2,$$
which implies that $s \ge  t(nt-1)/(n-1)$. In other words, $I_t^{(n)} \subseteq [ t(nt-1)/(n-1),t]$. 
It was noted in Remark~\ref{rem:A} that $A^{(n)}_t= A^{(n)}_{t,t(nt-1)/(n-1)}$  belongs to $\mathcal{D}(n)$ if and only if $t$ belongs to $n^{-1}  \Sigma_n$. For those values of $t$, we therefore obtain that  $s = t(nt-1)/(n-1)$ is the smallest number for which $(t,s)$ is an admissible pair, whence $I^{(n)}_t = [ t(nt-1)/(n-1),t]$.

It remains a curious open problem to detemine the interval $I^{(n)}_t$, for $t$ belonging to the (non-empty) set $[1/n, 1-1/n] \setminus n^{-1} \, \Sigma_n$. In this case, necessarily, $\min I^{(n)}_t > t(nt-1)/(n-1)$.
\end{remark}

\section{Correlation matrices of unitary elements} \label{sec:unitaries}

\noindent Recall that a correlation matrix is a positive definite matrix whose diagonal entries  are equal to $1$. For each integer $n \ge 2$, let $\mathcal{G}(n)$, $\mathcal{F}_{\mathrm{matrix}}(n)$, and $\mathcal{F}_{\mathrm{fin}}(n)$ be the set of $n \times n$ correlation matrices $\big[\tau(u_j^*u_i)\big]_{i,j=1}^n$, where $u_1,u_2, \dots, u_n$ are unitaries in some finite von Neumann algebra equipped with a faithful tracial state $\tau$, respectively, in some full matrix algebra with its canonical tracial state, respectively, in some  finite dimensional $C^*$-algebra with a faithful tracial state. Then the convex hull $\mathrm{conv}(\mathcal{F}_{\mathrm{matrix}}(n))$  of $\mathcal{F}_{\mathrm{matrix}}(n)$ is equal to $\mathcal{F}_{\mathrm{fin}}(n)$, and the two sets $\mathcal{F}_{\mathrm{matrix}}(n)$ and  $\mathcal{F}_{\mathrm{fin}}(n)$ have the same closure, which is denoted by $\mathcal{F}(n)$. The sets $\mathcal{G}(n)$ and $\mathcal{F}(n)$ are compact and convex, see \cite[Proposition 1.4]{DykJus:MS2011}.

As shown by Kirchberg,  \cite{Kir:CEP-1993} (cf.\ Dykema--Juschenko, \cite{DykJus:MS2011}), 
the Connes Embedding Problem has an affirmative answer if and only if $\mathcal{G}(n) = \mathcal{F}(n)$,  for all $n \ge 3$. We can use this to show that $\mathcal{D}_{\mathrm{matrix}}(n)$ is dense in $\mathcal{D}(n)$, for all $n \ge 3$, if and only if the Connes Embedding Problem has an affirmative answer. Indeed, the proof of the ``if'' part follows the same strategy as  the proof of the ``if'' part of Kirchberg's result,  where a given finite subset of $\mathcal{R}^\omega$ is approximated in trace-norm with a finite subset of a matrix subalgebra of $\mathcal{R}^\omega$. To see the ``only if'' part, assume that $\mathcal{D}_{\mathrm{matrix}}(n)$ is dense in $\mathcal{D}(n)$, for all $n \ge 3$. We show that this implies $\mathcal{G}(n) = \mathcal{F}(n)$,  for all $n \ge 3$. Take an $n$-tuple $u_1, \dots, u_n$ of unitaries in some tracial von Neumann algebra $(N,\tau)$. Fix an integer $m \ge 1$. Each $u_j$ can be approximated in norm within $2\pi/m$ by unitaries $v_1, \dots, v_n$ in $N$ of the form $v_j = \sum_{k=1}^m \omega^k p_{k,j}$, where $\omega = \exp(2\pi i/m)$ and $p_{1,j}, \dots, p_{m,j}$ are pairwise orthogonal projections in $N$ summing up to $1$, for each $j$. Approximate the second-order moments of the collections of projections $\{p_{k,j}\}_{k,j}$ by second-order moments of projections $\{q_{k,j}\}_{k,j}$ in some matrix algebra $(M_r(\C), \mathrm{tr}_r)$. Then $\mathrm{tr}_r(q_{k,j}q_{\ell,j})$ are small when $k \ne \ell$, and $\mathrm{tr}_r\big(\sum_{k=1}^m q_{k,j}\big)$ is close to $1$, for all $j$. A standard lifting argument allows us to replace the projections $\{q_{k,j}\}_{k,j}$ with new projections, close to the old ones in trace-norm, satisfying $\sum_{j=1}^m q_{k,j} = 1$, for all $j$. Set $w_j = \sum_{k=1}^m \omega^k q_{k,j}$. The second-order moments of the unitaries $w_1, \dots, w_n$ are then close to those of $u_1, \dots, u_n$.

\begin{remark} In the case where $n=2$, the set  $\mathcal{F}_{\mathrm{matrix}}(2)$ is closed and convex, and $\mathcal{F}_{\mathrm{matrix}}(2) = \mathcal{F}(2) = \mathcal{G}(2)$, which further is equal to the set of $2 \times 2$ matrices of the form
$$\begin{pmatrix} 1 & \bar{z} \\ z & 1 \end{pmatrix},$$
for $z \in \C$ with $|z| \le 1$. To see that each of these $2 \times 2$ matrices belongs to $\mathcal{F}_{\mathrm{matrix}}(2)$, take $z \in \C$ with $|z| \le 1$ and find $\lambda_1$ and $\lambda_2$ on the complex unit circle $\T$, with $z = (\lambda_1+\lambda_2)/2$. The correlation matrix arising from the unitary $2 \times 2$ matrices $u_1=1$ and $u_2 = \mathrm{diag}(\lambda_1,\lambda_2)$ is then as desired.
\end{remark}

We show in this section that the set $\mathcal{F}_{\mathrm{fin}}(n)$ is not closed (hence, not compact), for 
all $n \ge 11$.  This result originates in a remark made by T.\ Vidick during his talk at one of the workshops in the Quantitative Linear Algebra program at IPAM, Spring 2018, that led to subsequent discussions with W. Slofstra, who, in particular, communicated to us a version of the following result (to appear, in an approximate case, in a forthcoming paper by O.\ Regev, W.\ Slofstra and T.\ Vidick):

\begin{proposition} \label{prop:1}
Let $M$ be a finite von Neumann algebra with a faithful tracial state $\tau_M$, and let $p_1,\dots, p_n$ be projections in $M$. Further, let $u_0,u_1, \dots, u_n$, $u_{n+1}, \dots, u_{2n}$ be the unitaries in $M$ given by 
$$u_0 = 1, \qquad u_j = 2p_j - 1, \; \; (1 \le j \le n), \qquad u_j = (u_{j-n} + i \cdot 1)/{\sqrt{2}}, \; \; (n+1 \le j \le 2n).$$
Let $N$ be another finite von Neumann algebra with a faithful tracial state $\tau_N$. Then there exist $2n+1$ unitaries $v_0,v_1, \dots, v_{2n}$  in $N$ satisfying
\begin{equation} \label{*}
\tau_N(v_j^*v_i) = \tau_M(u_j^*u_i), \qquad 0 \leq i,j\leq 2n,
\end{equation}
if and only if there exist $n$ projections $q_1, \dots, q_n$ in $N$ satisfying
\begin{equation} \label{**}
 \tau_N(q_jq_i) = \tau_M(p_jp_i),\qquad 1\leq  i,j\leq n.
\end{equation}
\end{proposition}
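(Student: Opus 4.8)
The plan is to treat the two implications separately, noting that the forward (``if'') direction is a direct construction while the reverse (``only if'') direction carries the real content. The guiding observation is that every entry $\tau_M(u_j^*u_i)$ is a polynomial of degree at most two in the second-order projection data $\{\tau_M(p_k),\tau_M(p_kp_\ell)\}$, and, crucially, that the linear span of $u_0,u_1,\dots,u_{2n}$ is closed under adjoints, since $u_{n+j}^* = \sqrt2\,u_j - u_{n+j}$ while $u_0,\dots,u_n$ are self-adjoint. The identity that will do the work is the single linear relation $\sqrt2\,u_{n+j} - u_j - i\,u_0 = 0$, valid in $M$ for each $1\le j\le n$.

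For the ``if'' direction, suppose projections $q_1,\dots,q_n$ in $N$ satisfy \eqref{**}. I would set $v_0 = 1$, $v_j = 2q_j-1$ for $1\le j\le n$, and $v_{n+j} = (v_j + i)/\sqrt2$ for $1\le j\le n$; each $v_j$ is then a self-adjoint symmetry and each $v_{n+j}$ is unitary, by the same computation that shows the $u_j$ have these properties. Because $\tau_N(q_j) = \tau_N(q_j^2) = \tau_N(q_jq_j) = \tau_M(p_jp_j) = \tau_M(p_j)$, the first-order moments agree as well, so every correlation $\tau_N(v_j^*v_i)$ is the same degree-$\le 2$ polynomial in $\{\tau_N(q_k),\tau_N(q_kq_\ell)\}$ as $\tau_M(u_j^*u_i)$ is in $\{\tau_M(p_k),\tau_M(p_kp_\ell)\}$. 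Carrying out these routine expansions, split into the three index blocks $\{0\}$, $\{1,\dots,n\}$ and $\{n+1,\dots,2n\}$, yields \eqref{*}.

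For the ``only if'' direction, the first step is to transfer the linear relation to $N$. Fix $1\le j\le n$ and set $w = \sqrt2\,v_{n+j} - v_j - i\,v_0 \in N$. Expanding $w^*w$ produces only terms $v_a^*v_b$ with $a,b\in\{0,j,n+j\}\subseteq\{0,\dots,2n\}$, so by \eqref{*} one has $\tau_N(w^*w) = \tau_M(w_M^*w_M)$, where $w_M = \sqrt2\,u_{n+j} - u_j - i\,u_0 = 0$. Hence $\tau_N(w^*w)=0$, and faithfulness of $\tau_N$ forces $\sqrt2\,v_{n+j} = v_j + i\,v_0$. Now I invoke unitarity: from $2 = 2\,v_{n+j}^*v_{n+j} = (v_j+iv_0)^*(v_j+iv_0) = 2 + i\,(v_j^*v_0 - v_0^*v_j)$ we obtain $v_0^*v_j = v_j^*v_0$, i.e.\ $a_j := v_0^*v_j$ is self-adjoint; being a product of unitaries it is a symmetry, so $q_j := (1+a_j)/2$ is a projection in $N$.

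It remains to verify \eqref{**} for these $q_j$, and this is the step I expect to require the key idea. The obstacle is that $q_jq_i = \tfrac14(1 + a_i + a_j + a_ja_i)$ involves the product $a_ja_i = v_0^*v_j\,v_0^*v_i$, a \emph{fourth}-order moment of the $v$'s that is not directly controlled by \eqref{*}. The resolution is that self-adjointness of $a_j$ collapses it: $a_ja_i = a_j^*a_i = (v_j^*v_0)(v_0^*v_i) = v_j^*v_i$, so $\tau_N(a_ja_i) = \tau_N(v_j^*v_i) = \tau_M(u_j^*u_i)$ is again a genuine second-order correlation. Substituting $\tau_N(a_j) = \tau_M(u_0^*u_j) = \tau_M(u_j)$ and $\tau_M(u_j^*u_i) = \tau_M((2p_j-1)(2p_i-1))$ into the expansion of $\tau_N(q_jq_i)$ and simplifying, the single-trace terms cancel and one is left with exactly $\tau_M(p_jp_i)$, as required.
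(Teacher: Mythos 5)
Your proof is correct and follows essentially the same route as the paper: both directions hinge on transferring the linear relation $\sqrt2\,u_{n+j}=u_j+i\,u_0$ to $N$ via the second-order correlations and faithfulness of $\tau_N$, then recognizing that the resulting unitaries must be symmetries and reading off the projection moments. Your algebraic derivation of self-adjointness of $v_0^*v_j$ from $v_{n+j}^*v_{n+j}=1$ is just an equivalent reformulation of the paper's spectral observation that a unitary $u$ with $(u+i\cdot 1)/\sqrt2$ also unitary must be a symmetry, and your explicit collapse of the fourth-order moment $a_ja_i=v_j^*v_i$ is the computation the paper leaves implicit in ``use the isometric property of $\psi$.''
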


\begin{proof} Assume that $q_1, \dots, q_n$ are projections in $N$  satisfying \eqref{**}. Equip the vector spaces $\mathrm{span} \{p_1,p_2, \dots, p_n\}$ and 
$\mathrm{span}\{q_1,q_2, \dots, q_n\}$ with the Euclidean structure arising from the traces $\tau_M$ and $\tau_N$, respectively.
Using \eqref{**}, we see that the map $p_j \longmapsto q_j$, $1 \le j \le n$, extends to a well-defined linear isometry $\varphi$ from $\mathrm{span}\{p_1, \dots, p_m\} $ to $\mathrm{span}\{q_1, \dots, q_m\}$. 
Set $v_0 = 1$, $v_j = 2q_j-1$, for $1 \le j \le n$, and $v_j = (v_{j-n}+i \cdot 1)/\sqrt{2}$, for $n+1 \le j \le n$, and use the isometric property of $\varphi$ to check that \eqref{*} holds. 

Conversely, assume that we are given unitaries $v_0,v_1, \dots, v_{2n}$  in $N$ satisfying \eqref{*}. Upon replacing  $v_j$ by $v_0^*v_j$, for all $0\leq j\leq 2n$, we may assume that $v_0=1$. As above, equip the vector spaces $\mathrm{span} \{u_0,u_1, \dots, u_{2n}\}$ and
$\mathrm{span}\{v_0,v_1, \dots, v_{2n}\}$ with the Euclidean structure arising from the traces $\tau_M$ and $\tau_N$, respectively. Then, by \eqref{*}, we have a well-defined linear isometry $\psi \colon \mathrm{span} \{u_0,u_1, \dots, u_{2n}\}  \to \mathrm{span}\{v_0,v_1, \dots, v_{2n}\}$, mapping $u_j$ to $v_j$, for $0 \le j \le 2n$. 
In particular, for $1 \le j \le n$,
$$\big\|v_{j+n} - (v_j + i \cdot v_0)/\sqrt{2}\big\|_2 =  \big\|u_{j+n} - (u_j + i \cdot u_0)/\sqrt{2}\big\|_2 = 0,$$
so $v_{j+n} =  (v_j + i \cdot 1)/\sqrt{2}$.

Note that if $u$ and $(u+i \cdot 1)/\sqrt{2}$ are unitaries in some unital $C^*$-algebra, then $u$ is necessarily a symmetry.  Indeed, if $\lambda$ is a complex number such that $ |\lambda| = |(\lambda + i)/\sqrt{2}| = 1$, then $\lambda \in \R$. Hence, if $u$ is as stated, then its spectrum is contained in $\R$, which entails that it is a symmetry.

We conclude that $v_1, \dots, v_n$ are symmetries. For $1\leq j\leq  n$, set $q_j = (v_j+1)/2$. Then $q_j$ is a projection and $v_j = 2q_j-1$. Use the isometric property of $\psi$ to check \eqref{**}.
\end{proof}

\begin{corollary} The set $\mathcal{F}_{\mathrm{matrix}}(m)$ is not compact and not convex, whenever $m \ge 3$.
\end{corollary}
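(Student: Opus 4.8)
The plan is to deduce the whole statement from the $n=1$ instance of Proposition~\ref{prop:1}, in which a single projection gets encoded into three unitaries. Fix $t \in [0,1]$, let $p$ be a projection of trace $t$ in some finite von Neumann algebra $(M,\tau_M)$ (for irrational $t$, take $M$ to be a type II$_1$ factor, which contains projections of every trace in $[0,1]$), and form the unitaries $u_0 = 1$, $u_1 = 2p-1$, $u_2 = (u_1 + i\cdot 1)/\sqrt 2$ as in the proposition. Let $C(t) := \big[\tau_M(u_j^*u_i)\big]_{i,j=0}^2$ be the resulting $3\times 3$ correlation matrix. A direct computation, using $u_1^* = u_1$ and $u_1^2 = 1$, shows that every entry of $C(t)$ is an affine function of $t$ (for instance the $(1,0)$ entry is $2t-1$, which already makes $t \mapsto C(t)$ an affine injection of $[0,1]$ into the compact convex set $\mathcal{F}(3)$).

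The crux is the dichotomy that $C(t) \in \mathcal{F}_{\mathrm{matrix}}(3)$ if and only if $t \in \Q$. For the ``if'' direction, a rational $t$ is the normalized trace of a projection in some $M_k(\C)$, so $u_0,u_1,u_2$ may themselves be taken inside $M_k(\C)$, giving $C(t)\in\mathcal{F}_{\mathrm{matrix}}(3)$ directly. For the ``only if'' direction I invoke the converse half of Proposition~\ref{prop:1}: if $C(t)$ is realized by unitaries $v_0,v_1,v_2$ in a matrix algebra $M_k(\C)$, then the proposition produces a projection $q \in M_k(\C)$ with $\mathrm{tr}_k(q) = t$, which forces $t = \mathrm{tr}_k(q) \in \Q$. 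This converse direction is the one genuinely substantive input.

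Both conclusions then follow for $m=3$. For non-compactness, choose an irrational $t \in (0,1)$ and a sequence of rationals $t_\ell \to t$; then $C(t_\ell) \in \mathcal{F}_{\mathrm{matrix}}(3)$ while $C(t_\ell) \to C(t)$, and since $\mathcal{F}(3)$ is compact the limit lies in $\mathcal{F}(3) \setminus \mathcal{F}_{\mathrm{matrix}}(3)$, so $\mathcal{F}_{\mathrm{matrix}}(3)$ is not closed and hence not compact. For non-convexity, note $C(0), C(1) \in \mathcal{F}_{\mathrm{matrix}}(3)$, while affineness gives $\lambda C(0) + (1-\lambda)C(1) = C(1-\lambda)$; picking $\lambda$ with $1-\lambda$ irrational, e.g.\ $\lambda = 1/\sqrt 2$, exhibits a convex combination of elements of $\mathcal{F}_{\mathrm{matrix}}(3)$ lying outside it. To pass from $m=3$ to arbitrary $m \ge 3$, I would pad by appending the constant unitaries $u_3 = \cdots = u_{m-1} = 1$, obtaining an affine family $\widetilde C(t)$ whose principal submatrix on the index set $\{0,1,2\}$ is $C(t)$. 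Because a principal submatrix of a matrix-algebra correlation matrix is again the correlation matrix of the corresponding sub-collection of unitaries, the dichotomy $\widetilde C(t) \in \mathcal{F}_{\mathrm{matrix}}(m) \iff t \in \Q$ persists, and the two arguments above carry over verbatim.

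Granting the converse direction of Proposition~\ref{prop:1}, the remaining work is pure bookkeeping, and this is also where I expect the only mild friction: verifying that the entries of $C(t)$ are genuinely affine in $t$ (so that affine combinations of realizable matrices land again in the family), and confirming that membership in $\mathcal{F}_{\mathrm{matrix}}$ is inherited by principal submatrices, which legitimizes the padding argument for $m>3$.
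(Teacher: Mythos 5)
Your proof is correct and follows essentially the same route as the paper: both hinge on the $n=1$ case of Proposition~\ref{prop:1} together with the fact that full matrix algebras contain only projections of rational trace. Your affine identity $\lambda C(0)+(1-\lambda)C(1)=C(1-\lambda)$ is precisely the paper's realization of the witness matrix over $\C\oplus\C$ with the weighted trace $\alpha\tau_1+(1-\alpha)\tau_2$ (placing it in $\mathrm{conv}(\mathcal{F}_{\mathrm{matrix}}(3))=\mathcal{F}_{\mathrm{fin}}(3)$ but not in $\mathcal{F}_{\mathrm{matrix}}(3)$), and your padding argument for $m>3$ matches the paper's.
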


\begin{proof} Let  $0 < \alpha < 1$ be irrational. Equip $M := \C \oplus \C$ with the trace $\tau$ given by $\tau(x,y) = \alpha x + (1-\alpha)y$, for $x, y\in \mathbb{C}$.

 Consider first the case where $m=3$.  Let $n=1$ and let $p=p_1 = (1,0) \in M$. Then $\tau(p) = \alpha$ is irrational.  Let $u_0,u_1,u_2$ be the unitaries in $M$ arising from this projection as in the proposition above (with $n=1$). The  matrix $\big[\tau(u_j^*u_i)\big]_{i,j=0}^{2}$ belongs to $\mathrm{conv}(\mathcal{F}_{\mathrm{matrix}}(3))$, and hence to $\mathcal{F}(3)$. However, by Proposition~\ref{prop:1}, it does not belong to $\mathcal{F}_{\mathrm{matrix}}(3)$ itself,  because no full matrix algebra contains a projection of irrational trace, and therefore contains no projection $q=q_1$ satisfying \eqref{**} (with $n=1$).

Assume now that $m > 3$, let $u_0,u_1,u_2$ be as above, and let unitaries $u_3,u_4, \dots, u_{m-1} \in M$ be arbitrary.  If $v_0,v_1, \dots, v_{m-1}$ are unitaries in some tracial von Neumann algebra $(N,\tau_N)$ satisfying \eqref{*}, then $v_0,v_1,v_2$ satisfy \eqref{*} with respect to the set $\{u_0,u_1,u_2\}$, so  $v_0,v_1,v_2$, and hence $v_0,v_1, \dots, v_{m-1}$ cannot be found in a full matrix algebra.
These arguments also yield the non-convexity of the set $\mathcal{F}_{\mathrm{matrix}}(m)$ in all cases.
\end{proof}

\begin{example}
For $m=3$, the correlation matrix $B=\big[\tau(u_j^*u_i)\big]_{i,j=0}^{2}$ from the proof above, with unitaries given by
$$u_0 = (1,1), \qquad u_1 = (1,-1), \qquad u_2 = \left(\frac{1+i}{\sqrt{2}}, \frac{-1+i}{\sqrt{2}}\right),$$
has the following explicit form in terms of the parameter $\alpha \in (0,1)$:
$$B = \begin{pmatrix} 1 & \gamma& \frac{\gamma - i}{\sqrt{2}} \\
\gamma & 1 & \frac{1-\gamma i}{\sqrt{2}}\\
\frac{\gamma + i}{\sqrt{2}}& \frac{1+\gamma i}{\sqrt{2}}& 1
\end{pmatrix},$$
where $\gamma = 2\alpha-1 \in (-1,1)$.  Note that the matrix $B$ belongs to $\mathcal{F}_{\mathrm{fin}}(3)$, for all $\gamma \in (-1,1)$, while it does not belong to $\mathcal{F}_{\mathrm{matrix}}(3)$, whenever $\gamma$ is irrational. 
\end{example}

\begin{example} \label{rm:B}
For each $n \ge 2$ and each $t \in [1/n,1]$, consider the self-adjoint $(1+2n) \times (1+2n)$ complex matrix
$$B_t^{(n)} = \begin{pmatrix}1 & X^* & Y^* \\ X & D_1 & C^* \\ Y & C & D_2 \end{pmatrix},
$$
where $X, Y$ and $C, D_1, D_2$ are the $n \times 1$, respectively, $n \times n$ complex matrices given by 
$$X = s \begin{pmatrix}1 \\ 1 \\ \vdots \\ 1 \end{pmatrix}, \qquad Y = \frac{s+i}{\sqrt{2}} \begin{pmatrix} 1\\ 1 \\ \vdots \\ 1 \\\end{pmatrix}, \qquad  C = \frac{1+is}{\sqrt{2}} \, I_n + \frac{4(r-t)+1+is}{\sqrt{2}} \, E,
$$
$$D_1 = I_n + (4(r-t)+1) \, E, \qquad D_2 = I_n + (2(r-t)+1) \, E, \quad$$
where $s = 2t-1$ and  $r = (n-1)^{-1}t(nt-1)$, and 
$$E = \begin{pmatrix} 0 & 1 & \cdots & 1 \\ 1 & 0 &  \cdots & 1 \\ \vdots & \vdots & \ddots  & \vdots \\ 1 & 1  & \cdots & 0
\end{pmatrix}\in M_n(\mathbb{C}).$$
With this definition, Proposition~\ref{prop:1} yields that the $(2n+1) \times (2n+1)$ matrix $B^{(n)}_t$ is the correlation matrix of an $(2n+1)$-tuple of unitaries in some finite von Neumann algebra $M$ if and only if the $n \times n$ matrix $A_t^{(n)}$ is the correlation matrix of an $n$-tuple of projections in the same von Neumann algebra $M$. Indeed, if $p_1,\dots, p_n$ are projections in $M$ such that $\big[\tau_M(p_jp_i)\big]_{i,j=1}^n = A_t^{(n)}$, and if $u_0,u_1, \dots, u_{2n}$ are the $2n+1$ unitaries in $M$ constructed from these projections as in Proposition~\ref{prop:1}, then $B_t^{(n)} = \big[\tau_M(u_j^*u_i)\big]_{i,j=0}^{2n}$. Conversely, if $v_0,v_1, \dots, v_{2n}$ are unitaries in $M$ such that $B_t^{(n)} = \big[\tau_M(v_j^*v_i)\big]_{i,j=0}^{2n}$, then there are projections $q_1, \dots, q_n \in M$ such that $A_t^{(n)} = \big[\tau_M(q_jq_i)\big]_{i,j=1}^n$.

In particular, $B^{(n)}_t$ belongs to $\mathcal{G}(2n+1)$, respectively, to $\mathcal{F}_{\mathrm{fin}}(2n+1)$, if and only if $A^{(n)}_t$ belongs to $\mathcal{D}(n)$, respectively, to $\mathcal{D}_{\mathrm{fin}}(n)$.
\end{example}

\begin{theorem} \label{thm:F_n-non-compact}
Let $n \ge 5$ and let $t \in \Pi_n$. 
\begin{enumerate}
\item Then $B_t^{(n)}$ belongs to $\mathcal{F}_{\mathrm{fin}}(2n+1)$, if $t$ is rational, and $B_t^{(n)}$ belongs to $\mathcal{F}(2n+1) \setminus \mathcal{F}_{\mathrm{fin}}(2n+1)$, if $t$ is irrational. 
\item If $t$ is irrational and if $v_0,v_1, \dots, v_{2n}$ are unitaries in some finite von Neumann algebra $N$ with a faithful  tracial state $\tau_N$ such that $\tau_N(v_j^*v_i) = B_t^{(n)}(i,j)$, for $0 \le i,j \le 2n$, then $N$ is necessarily of type II$_1$.
\item If $t \in \big({\textstyle{\frac12}}(1-\sqrt{1-4/n}), {\textstyle{\frac12}}(1+\sqrt{1-4/n})\big)$, then there are unitaries $v_0,v_1, \dots, v_{2n}$ in the hyperfinite II$_1$ factor $\mathcal{R}$ such that $\tau_N(v_j^*v_i) = B_t^{(n)}(i,j)$, for $0 \le i,j \le 2n$.
\item The convex sets  $\mathcal{F}_{\mathrm{fin}}(k)$ are non-compact, for all $k \ge 11$.
\end{enumerate}
\end{theorem}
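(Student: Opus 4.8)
The plan is to read off all four parts from the dictionary of Example~\ref{rm:B}, which converts the unitary matrix $B_t^{(n)}$ into the projection matrix $A_t^{(n)}$, combined with the structural facts about $A_t^{(n)}$ recorded in Proposition~\ref{thm:maina} and Remark~\ref{rem:A}. Throughout I would use that $\Pi_n \subseteq n^{-1}\Sigma_n$ for $n \ge 5$ (since $n^{-1}\Sigma_n$ contains the whole interval $\Pi_n$) and that $n^{-1}\Sigma_n \setminus \Pi_n \subset \Q$, so that an \emph{irrational} $t \in \Pi_n$ automatically lies in $n^{-1}\Sigma_n \setminus \Q$. For part (i), Example~\ref{rm:B} gives $B_t^{(n)} \in \mathcal{F}_{\mathrm{fin}}(2n+1)$ iff $A_t^{(n)} \in \mathcal{D}_{\mathrm{fin}}(n)$; when $t \in \Pi_n$ is rational this holds by Proposition~\ref{thm:maina}(ii), and when $t$ is irrational it fails by Proposition~\ref{thm:maina}(iii). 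To nonetheless place $B_t^{(n)}$ in $\mathcal{F}(2n+1) = \overline{\mathcal{F}_{\mathrm{fin}}(2n+1)}$ for irrational $t$, I would pick rationals $t_k \in \Pi_n$ with $t_k \to t$ (possible since $\Pi_n$ has non-empty interior), note $B_{t_k}^{(n)} \in \mathcal{F}_{\mathrm{fin}}(2n+1)$ by the rational case, and observe that the entries of $B_t^{(n)}$ are polynomial/rational in $t$ through $s=2t-1$ and $r$, hence continuous, so $B_{t_k}^{(n)} \to B_t^{(n)}$ and $B_t^{(n)} \in \mathcal{F}(2n+1)$.

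Parts (ii) and (iii) are short translations. For (ii), if unitaries $v_0,\dots,v_{2n}$ in $(N,\tau_N)$ realize $B_t^{(n)}$ with $t$ irrational, the converse direction of Proposition~\ref{prop:1} (as spelled out in Example~\ref{rm:B}) produces projections $q_1,\dots,q_n$ in $N$ realizing $A_t^{(n)}$; since $t \in n^{-1}\Sigma_n \setminus \Q$, Proposition~\ref{thm:maina}(iv) forces $N$ to be of type II$_1$. For (iii), when $t$ lies in the open interior of $\Pi_n$, Remark~\ref{rem:A} supplies projections $p_1,\dots,p_n$ in $\mathcal{R}$ with $\tau_{\mathcal{R}}(p_jp_i) = A_t^{(n)}(i,j)$; feeding these into the forward construction $u_0=1$, $u_j = 2p_j-1$, $u_{j+n} = (u_j + i\cdot 1)/\sqrt{2}$ of Proposition~\ref{prop:1} yields unitaries $u_0,\dots,u_{2n} \in \mathcal{R}$ realizing $B_t^{(n)}$.

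For part (iv), taking $n=5$ (so $2n+1=11$) and an irrational $t \in \Pi_5$, part (i) already produces a sequence $B_{t_k}^{(5)} \in \mathcal{F}_{\mathrm{fin}}(11)$ converging to $B_t^{(5)} \notin \mathcal{F}_{\mathrm{fin}}(11)$, so $\mathcal{F}_{\mathrm{fin}}(11)$ is non-closed, and a non-closed subset of the compact set $\mathcal{F}(11)$ is non-compact. To reach all $k \ge 11$ (in particular the even dimensions, which $2n+1$ never hits) I would run a dimension-padding induction. Let $\iota$ be the map sending a $k \times k$ correlation matrix to the $(k+1)\times(k+1)$ matrix obtained by duplicating the last row and column (equivalently, repeating the last unitary). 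Then $\iota$ is continuous, carries $\mathcal{F}_{\mathrm{fin}}(k)$ into $\mathcal{F}_{\mathrm{fin}}(k+1)$ and hence $\mathcal{F}(k)$ into $\mathcal{F}(k+1)$, and admits the continuous top-left $k \times k$ block projection as a left inverse. Consequently, if $M \in \mathcal{F}(k) \setminus \mathcal{F}_{\mathrm{fin}}(k)$, then $\iota(M) \in \mathcal{F}(k+1)$, whereas $\iota(M) \in \mathcal{F}_{\mathrm{fin}}(k+1)$ would restrict via the left inverse to $M \in \mathcal{F}_{\mathrm{fin}}(k)$, a contradiction; thus $\iota(M) \in \mathcal{F}(k+1) \setminus \mathcal{F}_{\mathrm{fin}}(k+1)$ and $\mathcal{F}_{\mathrm{fin}}(k+1)$ is non-closed. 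Starting from $k=11$, induction then yields non-compactness for every $k \ge 11$.

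The main obstacle I anticipate is precisely the passage from the odd dimensions $2n+1$, where the dictionary of Example~\ref{rm:B} applies verbatim, to all dimensions $\ge 11$; the delicate point is verifying that $\iota$ and its block-projection left inverse genuinely respect both classes $\mathcal{F}_{\mathrm{fin}}$ and its closure $\mathcal{F}$, so that non-closure is transported cleanly under $k \mapsto k+1$. Everything else is a continuity-and-density bookkeeping layered on top of the already-established properties of $A_t^{(n)}$.
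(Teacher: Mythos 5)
Your proposal is correct and follows essentially the same route as the paper: parts (i)--(iii) are read off from the dictionary of Example~\ref{rm:B} together with Proposition~\ref{prop:main}/Proposition~\ref{thm:maina} and Remark~\ref{rem:A} exactly as in the printed proof, and your padding map $\iota$ for part (iv) is the paper's map $\rho$ up to the immaterial choice of repeating the last rather than the first unitary. No gaps.
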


\begin{proof}  (i). The map $t \mapsto B_t^{(n)}$, $t \in \Pi_n$, is clearly continuous. It follows from Proposition~\ref{prop:main} and Example~\ref{rm:B} that  $B_t^{(n)} \in \mathcal{G}(2n+1)$, whenever $A_t^{(n)} \in \mathcal{D}(n)$, and in particular whenever $t \in \Pi_n$.  Moreover, if $t \in \Pi_n$, then $B_t^{(n)} \in \mathcal{F}_{\mathrm{fin}}(2n+1)$, when $t$ is rational, and $B_t^{(n)} \notin  \mathcal{F}_{\mathrm{fin}}(2n+1)$, when $t$ is irrational. This implies that $B_t^{(n)} \in \mathcal{F}(2n+1)$, for all $t \in \Pi_n$, and that $\mathcal{F}_{\mathrm{fin}}(2n+1)$ is non-compact, for each $n \ge 5$.

(ii). Let $v_0,v_1, \dots, v_{2n}$ be unitaries in some finite von Neumann algebra $N$ with faithful  tracial state $\tau_N$, satisfying $\tau_N(v_j^*v_i) = B_t^{(n)}(i,j)$, for $0 \le i,j \le 2n$. Then, by Example~\ref{rm:B}, there exist projections $q_1,\dots, q_n$ in $N$ such that $A_t^{(n)}(i,j) = \tau_N(q_jq_i)$. This entails that $N$ has no finite dimensional representations, by Proposition~\ref{prop:main},  so $N$ must be of type II$_1$. 

(iii). Let $t \in \big({\textstyle{\frac12}}(1-\sqrt{1-4/n}), {\textstyle{\frac12}}(1+\sqrt{1-4/n})\big)$. It follows from Theorem~\ref{thm:hyprealization} in the Appendix by Ozawa that $A_t^{(n)}$ can be realized using projections in $\mathcal{R}$, cf.\ Remark~\ref{rem:A}. The claim now follows from Example~\ref{rm:B}.

(iv). We show that if $\mathcal{F}_{\mathrm{fin}}(k)$ is non-compact, for some positive integer $k$, then so is $\mathcal{F}_{\mathrm{fin}}(k+1)$. To this end, define a map $\rho \colon \mathcal{F}(k) \to \mathcal{F}(k+1)$ by
$$\rho\Big(\big[\tau_M(u^*_ju_i)\big]_{i,j=1}^k\Big) = \big[\tau_M(u^*_ju_i)\big]_{i,j=1}^{k+1},$$
whenever $u_1, \dots, u_k$ are unitaries in some von Neumann algebra $M$ with a faithful tracial state $\tau$, and where $u_{k+1}$ is chosen to be equal to $u_1$. Then the last row and the last column of $\big[\tau_M(u^*_ju_i)\big]_{i,j=1}^{k+1}$ is equal to the first row and the first column of the matrix $\big[\tau_M(u^*_ju_i)\big]_{i,j=1}^k$, which shows that $\rho$ is well-defined and continuous. Moreover, $\rho^{-1}(\mathcal{F}_{\mathrm{fin}}(k+1)) = \mathcal{F}_{\mathrm{fin}}(k)$. Hence, if $\mathcal{F}_{ \mathrm{fin}}(k+1)$ is compact, and thus closed, then $\mathcal{F}_{\mathrm{fin}}(k)$ is closed, and thus compact. 
\end{proof}

\section{Factorizable maps that require infinite dimensional ancilla}

\noindent We prove here our claimed result about existence of factorizable quantum channels in all dimensions $\geq 11$, requiring infinite dimensional ancilla. We first recall necessary prerequisites.

Let $\mathrm{UCPT}(n)$ denote the convex and compact set of all unital completely positive trace preserving linear maps $T \colon M_n(\C) \to M_n(\C)$, $n\geq 2$. Maps in $\mathrm{UCPT}(n)$ are also called unital quantum channels in dimension $n$. Anantharaman-Delaroche defined in \cite{A-D:factorizable} a channel to be \emph{factorizable} if it admits a factorization (in a suitable way) through a finite von Neumann algebra with a faithful tracial state. This notion was studied extensively in \cite{HaaMusat:CMP-2011}, and the following characterization (which we will take to be our definition of factorizable maps) was established therein (cf., \cite[Theorem 2.2]{HaaMusat:CMP-2011}):
A unital quantum channel $T$ in dimension $n$ is factorizable if and only if there exist a finite von Neumann algebra $N$, equipped with a normal faithful tracial state $\tau_N$,  and a unitary $u \in M_n(\C) \otimes N$ such that
\begin{equation} \label{HMsf}
T(x) = (\mathrm{id}_n \otimes \tau_N)(u(x \otimes 1_N)u^*), \qquad x \in M_n(\C).
\end{equation}
The von Neumann algebra $N$ above is also called the \emph{ancilla}, and, following Definition 3.1 in  \cite{HaaMusat:CMP-2015}, we say that $T$ has an {\em exact factorization} through $M_n(\C) \otimes N$. The set of all factorizable unital channels in dimension $n$ is denoted by $\mathcal{FM}(n)$. This set is convex and compact, as shown by standard arguments.

Further, let $\mathcal{FM}_{\mathrm{matrix}}(n)$ and $\mathcal{FM}_{\mathrm{fin}}(n)$ denote the set of factorizable maps in UCPT$(n)$ that exactly factor through a full matrix algebra, respectively, through a finite dimensional $C^*$-algebra, equipped with a faithful tracial state. 
It was shown in \cite[Theorem 3.7]{HaaMusat:CMP-2015} that a positive answer to the Connes Embedding Problem is equivalent to $\mathcal{FM}_{\mathrm{matrix}}(n)$ being dense in $\mathcal{FM}(n)$, for all $n \ge 3$. Moreover, if a unital quantum channel $T\colon M_n(\mathbb{C})\rightarrow M_n(\mathbb{C})$ belongs to the closure of $\mathcal{FM}_{\mathrm{matrix}}(n)$, then $T$ admits an exact factorization through an ultrapower $\mathcal{R}^\omega$ of the hyperfinite type II$_1$ factor $\mathcal {R}$.

It is shown in the upcoming manuscript \cite{Musat:2018} that $\mathcal{FM}_{\mathrm{fin}}(n) = \mathrm{conv}(\mathcal{FM}_{\mathrm{matrix}}(n))$, and that $\mathcal{FM}_{\mathrm{matrix}}(n)$ is non-compact and non-convex, when $n \ge 3$. 

Let $\mathcal{S}(n)$ be the set of all \emph{Schur multipliers} $T_B \colon M_n(\C) \to M_n(\C)$, where $B \in M_n(\C)$, i.e., $T_B(x)$ is the Schur product of $B$ and $x$, for $x \in M_n(\C)$. Let $\mathcal{FMS}(n) = \mathcal{FM}(n) \cap \mathcal{S}(n)$ be the set of all factorizable Schur multipliers, and write $\mathcal{FMS}_{\mathrm{fin}}(n) = \mathcal{FM}_{\mathrm{fin}}(n) \cap \mathcal{S}(n)$.

For the theorem below, recall the definition \eqref{eq:Pi} of the set $\Pi_n$, and the $(2n+1) \times (2n+1)$ matrix $B_t^{(n)}$ constructed in Example~\ref{rm:B}.

\begin{theorem} \label{thm:infdimfactorizable}
The set $\mathcal{FM}_{\mathrm{fin}}(k)$ is not compact, for all $k \ge 11$. 
Moreover,  for each $n \ge 5$ and  each irrational number $t \in \Pi_n$, the Schur multiplier $T_B$, where $B = B_t^{(n)}$, is a factorizable map belonging to the closure of  $\mathcal{FM}_{\mathrm{fin}}(2n+1)$, but not to $\mathcal{FM}_{\mathrm{fin}}(2n+1)$, and it requires an ancilla of type II$_1$. This ancilla can be taken to be the hyperfinite II$_1$ factor $\mathcal{R}$, when ${\textstyle{\frac12}}(1-\sqrt{1-4/n}) < t < {\textstyle{\frac12}}(1+\sqrt{1-4/n})$.
\end{theorem}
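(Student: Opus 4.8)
The plan is to connect factorizable Schur multipliers to the correlation matrices $\mathcal{G}(n)$ and $\mathcal{F}_{\mathrm{fin}}(n)$ studied in Section~3, exactly as indicated in the introduction where the authors cite \cite{HaaMusat:CMP-2015} for ``a connection between the set $\mathcal{G}(n)$ and the set of factorizable Schur multipliers on $M_n(\C)$.'' First I would recall the precise statement of that connection: a Schur multiplier $T_B$ on $M_m(\C)$ (with $B$ a correlation matrix, i.e.\ positive semidefinite with unit diagonal) is factorizable with ancilla $N$ precisely when $B$ arises as $\big[\tau_N(v_j^*v_i)\big]$ for some $m$-tuple of unitaries $v_1,\dots,v_m$ in $N$. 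In other words, the map $B \mapsto T_B$ carries $\mathcal{G}(m)$ onto $\mathcal{FMS}(m)$, carries $\mathcal{F}_{\mathrm{fin}}(m)$ onto $\mathcal{FMS}_{\mathrm{fin}}(m)$, and respects the type of the ancilla. Granting this dictionary, every assertion about $B_t^{(n)}$ in Theorem~\ref{thm:F_n-non-compact} transfers verbatim to a statement about the Schur multiplier $T_{B_t^{(n)}}$, with $m = 2n+1$.

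With this in hand, the second sentence of the theorem is essentially a restatement of Theorem~\ref{thm:F_n-non-compact}. For $n \ge 5$ and irrational $t \in \Pi_n$, Theorem~\ref{thm:F_n-non-compact}(i) gives $B_t^{(n)} \in \mathcal{F}(2n+1) \setminus \mathcal{F}_{\mathrm{fin}}(2n+1)$; applying the dictionary shows $T_{B_t^{(n)}}$ lies in the closure of $\mathcal{FMS}_{\mathrm{fin}}(2n+1)$ (hence of $\mathcal{FM}_{\mathrm{fin}}(2n+1)$) but not in $\mathcal{FMS}_{\mathrm{fin}}(2n+1)$. The requirement of a type~II$_1$ ancilla is then Theorem~\ref{thm:F_n-non-compact}(ii): any finite von Neumann algebra implementing the unitary correlations $B_t^{(n)}$ is forced to be of type~II$_1$, and by the dictionary this is exactly the ancilla of the factorizable map. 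Finally, the claim that $\mathcal{R}$ suffices in the range ${\textstyle{\frac12}}(1-\sqrt{1-4/n}) < t < {\textstyle{\frac12}}(1+\sqrt{1-4/n})$ is the transfer of Theorem~\ref{thm:F_n-non-compact}(iii), which produces the unitaries $v_0,\dots,v_{2n}$ inside $\mathcal{R}$ itself.

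For the first sentence — non-compactness of $\mathcal{FM}_{\mathrm{fin}}(k)$ for all $k \ge 11$ — I would argue that $\mathcal{FMS}_{\mathrm{fin}}(k)$ is a relatively closed subset of $\mathcal{FM}_{\mathrm{fin}}(k)$ (Schur multipliers form a closed subspace of the bounded maps, and factorizability through finite-dimensional algebras is cut out consistently), so non-closure of $\mathcal{FMS}_{\mathrm{fin}}(k)$ forces non-closure of $\mathcal{FM}_{\mathrm{fin}}(k)$. Combined with the fact that $\mathcal{FMS}_{\mathrm{fin}}(2n+1)$ is non-closed for $n \ge 5$ (giving the odd values $k = 11, 13, \dots$), I would then use a dimension-raising step paralleling Theorem~\ref{thm:F_n-non-compact}(iv): a continuous map $\mathcal{FM}(k) \to \mathcal{FM}(k+1)$ whose preimage of $\mathcal{FM}_{\mathrm{fin}}(k+1)$ equals $\mathcal{FM}_{\mathrm{fin}}(k)$, so that compactness of the larger set would force compactness of the smaller, propagating non-compactness from odd $k$ to $k+1$ and covering all $k \ge 11$.

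The main obstacle I anticipate is verifying the precise form of the Schur-multiplier dictionary and, in particular, that it preserves \emph{finite-dimensionality} and \emph{type} of the ancilla in both directions — the forward direction (unitaries produce a factorizable Schur multiplier) is routine from \eqref{HMsf}, but the converse, that a factorizable Schur multiplier $T_B$ forces its ancilla to carry unitaries realizing $B$ as a correlation matrix, is where one must invoke the structural result of \cite{HaaMusat:CMP-2015} carefully; this is what guarantees that ``requires a type~II$_1$ ancilla'' for the correlation problem translates into the same statement for the channel. The dimension-raising step for part one also requires checking that the chosen continuous map genuinely detects finite-dimensionality of the ancilla, rather than merely padding the matrix size; I expect this to mirror the unitary construction of Theorem~\ref{thm:F_n-non-compact}(iv) closely but to need a short verification that it interacts correctly with the Schur-multiplier structure.
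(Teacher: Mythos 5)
Your proposal is correct and follows essentially the same route as the paper: the dictionary from \cite[Proposition 2.8]{HaaMusat:CMP-2011} identifying factorizable Schur multipliers $T_B$ (with ancilla $N$) with realizations of $B$ as a unitary correlation matrix in $N$, the transfer of all parts of Theorem~\ref{thm:F_n-non-compact} through this dictionary, and the observation that $\mathcal{S}(k)$ is closed so non-closure of $\mathcal{FMS}_{\mathrm{fin}}(k)=\mathcal{FM}_{\mathrm{fin}}(k)\cap\mathcal{S}(k)$ forces non-compactness of $\mathcal{FM}_{\mathrm{fin}}(k)$. The only superfluous step is your proposed dimension-raising at the level of channels: Theorem~\ref{thm:F_n-non-compact}~(iv) already gives non-compactness of $\mathcal{F}_{\mathrm{fin}}(k)$ for \emph{all} $k\ge 11$ (not just odd $k$), so the paper simply transfers that statement via the dictionary and needs no separate argument for even $k$.
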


\begin{proof} It was shown in \cite[Proposition 2.8]{HaaMusat:CMP-2011} that if $B \in M_k(\C)$ is a correlation matrix, then the associated Schur multiplier $T_B$ admits an exact factorization through $M_k(\C) \otimes N$, where $N$ is a finite von Neumann algebra with normal faithful tracial state $\tau_N$, if and only if $B = \big[\tau_N(u_j^*u_i)\big]_{i,j=1}^k$, for some unitaries $u_1,\dots, u_k \in N$.  In particular,
$T_B $ belongs to $\mathcal{FMS}(k)$, $\mathcal{FMS}_{\mathrm{fin}}(k)$, and the closure of $\mathcal{FMS}_{\mathrm{fin}}(k)$, respectively, if and only if $B$ belongs to $\mathcal{G}(k)$, $\mathcal{F}_{\mathrm{fin}}(k)$, and $\mathcal{F}(k)$, respectively.

Since the map from $\mathcal{G}(k)$ to $\mathcal{FM}(k)$ given by $B \mapsto T_B$  is continuous, it follows from Theorem~\ref{thm:F_n-non-compact} that $\mathcal{FMS}_{\mathrm{fin}}(k)$ is non-closed in $\mathcal{FM}(k)$. As the set $\mathcal{S}(k)$ is closed, we conclude that $\mathcal{FM}_{\mathrm{fin}}(k)$ is non-compact.

To prove the second part of the theorem, let $n \ge 5$, let $t \in \Pi_n$ be irrational, and let $B=B_t^{(n)}$. Then $B$ belongs to $\mathcal{F}(2n+1)\setminus \mathcal{F}_{\mathrm{fin}}(2n+1)$ by Theorem~\ref{thm:F_n-non-compact}. By the argument in the first paragraph, we conclude that  $T_B$ belongs to the closure of $\mathcal{FMS}_{\mathrm{fin}}(2n+1)$, but not to $\mathcal{FMS}_{\mathrm{fin}}(2n+1)$. Moreover, if $T_B$ admits an exact factorization through $M_n(\C) \otimes N$, with ancilla $(N,\tau_N)$ as in the first paragraph, then $B$ is the matrix of correlations of unitaries $u_0,u_1, \dots, u_{2n}$ in $N$, which by Theorem~\ref{thm:F_n-non-compact} implies that $N$ must be of type II$_1$. 

The remaining part of the theorem follows from Theorem~\ref{thm:F_n-non-compact}~(iii) and the argument in the first paragraph.
\end{proof}

 We also obtain the following quantitative version of the theorem above, in the case where 
$t \in \Pi_n$ is rational and $n \ge 5$: Let $B = B_t^{(n)}$.  If $nt = a/b$, with $a,b$ positive integers and $a/b$  irreducible, then the Schur channel $T_B$ admits an exact factorization through a finite dimensional von Neumann algebra $M= M_n(\C) \otimes N$, where the von Neumann algebra $N$ can have no representation on a Hilbert space of dimension smaller than $b$. This follows as in the proof of the theorem above and by appealing to Proposition~\ref{prop:main}~(i). 

We conclude that for every fixed integer $n \ge 11$, there is a sequence of factorizable unital  quantum channels in dimension $n$, each admitting finite-dimensional ancillas, but where the size of any such ancillas must tend to infinity.

\newpage

\appendix

{\centerline{{\Large{\bf{Appendix}}}}

\section*{Realizing the Kruglyak--Rabanovich--Samoilenko projections in the hyperfinite II$_1$ factor}

\bigskip

\centerline{{\Large{by Narutaka Ozawa}}}

\bigskip \noindent Kruglyak, Rabanovich, and Samoilenko proved in  {\cite[Theorem 6]{KRS:Sums_2002}}, cf.\ Theorem~\ref{thm:KRS}, that 
for any $n\geq5$ and 
any $\alpha \in [\frac{1}{2}(n-\sqrt{n^2-4n}),\frac{1}{2}(n+\sqrt{n^2-4n})]$, 
there are orthogonal projections $p_1,\ldots,p_n$ such that 
$\sum_i p_i=\alpha$.
In this appendix, we observe that the Kruglyak--Rabanovich--Samoilenko 
construction shows that 
these projections are realized 
in the hyperfinite $\mathrm{II}_1$ factor $\cR$, 
possibly except for the extremities. 

\begin{customthm}{A.1}
\label{thm:hyprealization}
For any $n \ge 5$ and any 
$\alpha \in (\frac{1}{2}(n-\sqrt{n^2-4n}),\frac{1}{2}(n+\sqrt{n^2-4n}))$, 
there are projections $p_1,\ldots,p_n\in \cR$ which satisfy 
$\sum_i p_i=\alpha$.
\end{customthm}

Let $(M,\tau)$ be a finite von Neumann algebra. 
By a \emph{matricial approximation} (or matricial microstates) 
of a $d$-tuple $(a_1,\ldots,a_d)$ 
in $M^\mathrm{sa}$, we mean a sequence 
$(x_1(n),\ldots,x_d(n)) \in (M_{k(n)}(\C)^\mathrm{sa})^d$ 
such that 
$\lim_n \tr(p(x_1(n),\ldots,x_d(n))) = \tau(p(a_1,\ldots,a_d))$ 
for every polynomial $p$ in $d$ non-commuting variables. 
A matricial approximation of a generating  $d$-tuple $(a_1,\ldots,a_d)$ 
of $M$ gives rise to an embedding 
of $(M,\tau)$ into the tracial ultraproduct of $(M_{k(n)}(\C),\tr_{k(n)})$.
Recall that $M$ satisfies the Connes Embedding Conjecture,
i.e., $(M,\tau)\hookrightarrow(\cR^\omega,\tau^\omega)$,
if and only if every (or some) generating $d$-tuple $(a_i)_{i=1}^d$ 
in $M^\mathrm{sa}$ admits a matricial approximation. 
We will give a sufficient condition for hyperfiniteness of $M$ in terms 
of a matricial approximation. 
For $x = (x_{ij}) \in M_k(\C)$, we define its \emph{propagation} 
to be $\max\{ |i-j| : x_{ij}\neq0\}$. 

\begin{customlm}{A.2}
\label{lem:unifbddprop}
Let $(M,\tau)$ be a finite von Neumann algebra generated by 
$a_1,\ldots,a_d \in M^\mathrm{sa}$. 
Assume that $(a_1,\ldots, a_d)$ admits a matricial approximation 
$(x_1(n),\ldots,x_d(n))$ with uniformly bounded propagations.
Then $M$ is hyperfinite. 
\end{customlm}

\begin{proof}
Let $k$ be a positive integer and consider the shift unitary matrix
$z_k\in M_k({\mathbb C})$ given by $(z_k)_{i,j}=\delta_{i+1,j}$
(modulo $k$).
It normalizes the diagonal maximal abelian subalgebra
$D_k\subset M_k({\mathbb C})$.
Observe that any $y \in M_k(\C)$ that has propagation at most $l$
can be written as $y=\sum_{m=-l}^l f_m z_k^m$ for some $f_m\in D_k$
with $\|f_m\|\le\|y\|$.

Now, let a matricial approximation $(x_1(n),\ldots,x_d(n))$
be given as in the statement.
We denote by $M_\omega$ the tracial ultraproduct of
$(M_{k(n)}({\mathbb C}),\mathrm{tr}_{k(n)})$ and by
$D_\omega$ the subalgebra arising from the diagonal maximal
abelian subalgebras $D_{k(n)}$.
 From the above discussion, one sees that the element
$x_i\in M_\omega$ that corresponds to $(x_i(n))_n$ belongs to
the von Neumann subalgebra generated by $D_\omega$ and $z$,
where $z$ is the unitary element corresponding to $(z_{k(n)})_n$.
The von Neumann subalgebra generated by $x_1,\ldots,x_d$ is
isomorphic to $M$ and the von Neumann subalgebra generated by
$D_\omega$ and $z$ is hyperfinite (as it is isomorphic to
$D_\omega\rtimes {\mathbb Z}$, assuming $k(n)\to\infty$).
\end{proof}

\begin{proof}[Proof of Theorem~\ref{thm:hyprealization}]
Firstly, note that every separable
finite von Neumann algebra $(N,\tau)$ with a faithful normal tracial state  is embeddable in a trace-preserving way into a separable $\mathrm{II}_1$ factor $M$, which can be taken to be the hyperfinite II$_1$ factor $\cR$ if $M$ is hyperfinite. Indeed, as observed by U.\ Haagerup, we may take $M$ to be $(\bigotimes_{n=1}^\infty N) \rtimes S_\infty$, where the infinite tensor product is with respect to the standard representation of $N$ on $L^2(N,\tau)$, and where $S_\infty$ is the (locally finite) group of permutations on the natural numbers with finite support. It therefore suffices to find the projections $p_1, \dots, p_n$ in any hyperfinite finite von Neumann algebra $N$. 

For each $\alpha\in{\mathbb Q}\cap[3/2,2]$, the projections $P_1(\alpha),\ldots,P_5(\alpha)$ 
in $M_{k(\alpha)}(\C)$ that satisfy $\sum_i P_i(\alpha)=\alpha$ are constructed in 
\cite[Theorem 6]{KRS:Sums_2002}
as $R_i$. 
The proof of Theorem~6 (and Lemma~7) in \cite{KRS:Sums_2002} reveals that 
the projections $R_i$ are obtained by \emph{sewing} (see \cite[Definition~1]{KRS:Sums_2002}) 
the projections $P^{(k)}_i \in M_{k_i+2}(\C)$, $k_i\in\{1,2,3\}$. 
Since $P^{(k)}_i$'s have propagation at most $4$, 
the projections $R_i$ have propagation at most $8$, regardless of $\alpha$. 

Let $\alpha\in(3/2,2)$ be given and take a rational sequence $(\alpha_n)_n$ 
which converges to $\alpha$. 
Then after passing to a convergent subsequence, 
$(P_1(\alpha_n),\ldots,P_5(\alpha_n))$ is a matricial approximation 
of $(P_1,\ldots, P_5)$ in the tracial ultraproduct $M_\omega$ 
of $(M_{k(n)}(\C),\tr_{k(n)})$ and $(P_1,\ldots, P_5)$ 
satisfies $\sum_i P_i=\alpha$. 
By Lemma~\ref{lem:unifbddprop}, the projections 
$P_1,\ldots, P_5$ generate a hyperfinite von Neumann subalgebra. 
This proves Theorem~\ref{thm:hyprealization} for $n=5$ and $\alpha\in[3/2,2]$. 
By \cite[Lemma 5]{KRS:Sums_2002}, this implies Theorem~\ref{thm:hyprealization} for 
every $n\geq5$ and $\alpha\in[2,n-2]$.

Finally, note that all values in 
$ (\frac{1}{2}(n-\sqrt{n^2-4n}),\frac{1}{2}(n+\sqrt{n^2-4n}))$ are 
obtained by iterating the numerical mappings $\Phi^+$ and $\Phi^-$ (see \cite[Section 1.2]{KRS:Sums_2002}) 
starting at $\alpha\in[2,n-2]$ (see \cite[Lemma~6]{KRS:Sums_2002}).
Thus it suffices to show the functors $S$ and $T$ constructed 
in Section 1.2 in \cite{KRS:Sums_2002} preserve hyperfiniteness. 
This is clear for the linear reflection $T$. 
For the reader's convenience, we replicate here the construction of 
the hyperbolic reflection $S$, adapted to our setting. 
Let $P_1,\ldots,P_n\in N$ be projections such that $\sum_{i=1}^n P_i=\alpha$. 
We will construct projections $Q_1,\ldots,Q_n$ such that $\sum_{i=1}^n Q_i=\frac{\alpha}{\alpha-1}$. 
Put 
\[
V_i:=(\alpha^2-\alpha)^{-1/2} \, P_i \left[\begin{matrix} -P_1 & \cdots & \alpha-P_i &\cdots&-P_n\end{matrix}\right] \in M_{1,n}(N).
\]
Then, 
$V_iV_i^*=(\alpha^2-\alpha)^{-1}P_i(\alpha^2-2\alpha P_i + \sum_{k=1}^n P_k)P_i=P_i$ and 
$V_i$ is a partial isometry. 
Hence $Q_i:=V_i^*V_i \in M_n(N)$ is a projection. A calculation shows 
$$\sum_k Q_k=(\alpha^2-\alpha)^{-1}(\alpha^2 \, \mathrm{diag}(P_1,\ldots,P_n) - \alpha \, [P_iP_j]_{i.j} )\in M_n(N).$$
Note that $Q:=\mathrm{diag}(P_1,\ldots,P_n) - \alpha^{-1} [P_iP_j]_{i.j} $ is a projection and 
one has $\sum_k Q_k=\frac{\alpha}{\alpha-1}Q$. 
Thus, viewing $Q_k$ as projections in $QM_n(N)Q$, we are done. 
When $N$ is hyperfinite, so is the amplification $QM_n(N)Q$.
\end{proof}

{\small{
\bibliographystyle{amsplain}
\providecommand{\bysame}{\leavevmode\hbox to3em{\hrulefill}\thinspace}
\providecommand{\MR}{\relax\ifhmode\unskip\space\fi MR }
\providecommand{\MRhref}[2]{%
  \href{http://www.ams.org/mathscinet-getitem?mr=#1}{#2}
}
\providecommand{\href}[2]{#2}

}}

\vspace{1cm}

\noindent
\begin{tabular}{ll}
Magdalena Musat & Mikael R\o rdam \\
Department of Mathematical Sciences & Department of Mathematical Sciences\\
University of Copenhagen & University of Copenhagen\\ 
Universitetsparken 5, DK-2100, Copenhagen \O & Universitetsparken 5, DK-2100, Copenhagen \O \\
Denmark & Denmark\\
musat@math.ku.dk & rordam@math.ku.dk
\end{tabular}

\vspace{1.5cm}

\noindent 
\begin{tabular}{l}
Narutaka Ozawa \\
RIMS, Kyoto University\\
Sakyo-ku, Kyoto 606-8502\\
Japan\\
narutaka@kurims.kyoto-u.ac.jp\\
\end{tabular}

\end{document}